\documentclass[11pt,a4paper,dvipsnames]{article}
\usepackage[
  a4paper,
  left=32mm,
  right=32mm,
  top=25mm,
  bottom=30mm
]{geometry}
\usepackage[T1]{fontenc}
\usepackage[english]{babel}
\usepackage{newtxtext}
\usepackage{microtype}
\usepackage{amsthm,amsmath,amssymb,mathtools}
\usepackage{newtxmath}
\numberwithin{equation}{section}
\newcommand{\set}[1]{\mathord{\left.\{ #1 \} \right. }}
\newcommand{\Bigset}[1]{\mathord{\left\{#1\right\}}}
\newcommand{\p}[1]{\mathord{(#1)}}
\newcommand{\Bigp}[1]{\mathord{\left(#1\right)}}
\newcommand{\abs}[1]{\left\lvert#1\right\rvert}
\newcommand{\Bignorm}[1]{\left\lVert#1\right\rVert}
\newcommand{\Biginner}[2]{\left\langle #1, #2 \right\rangle}
\newcommand{\inner}[2]{\langle #1, #2 \rangle}
\newcommand{\xmiddle}[1]{\;\middle#1\;}

\newcommand{\reals}{\mathbb{R}}
\newcommand{\naturals}{\mathbb{N}_0}
\newcommand{\calH}{\mathcal{H}}
\newcommand{\calG}{\mathcal{G}}

\DeclareMathOperator{\prox}{prox}
\DeclareMathOperator*{\minimize}{minimize}
\DeclareMathOperator*{\maximize}{maximize}
\DeclareMathOperator*{\argmin}{argmin}
\DeclareMathOperator{\dom}{dom}
\DeclareMathOperator{\Id}{Id}

\usepackage{enumitem}
\usepackage[bf,sf]{titlesec}

\usepackage{xcolor}
\definecolor{ForestGreen}{rgb}{0.13,0.55,0.13}
\definecolor{MidnightBlue}{rgb}{0.098,0.098,0.439}

\usepackage{xurl}
\usepackage{hyperref}
\hypersetup{
  colorlinks = true,
  urlcolor = MidnightBlue,
  linkcolor = MidnightBlue,
  citecolor = ForestGreen,
  pdftitle = {The Chambolle--Pock method converges weakly with 0 < theta <= 1/2 and tau sigma norm(L) squared < 4 theta (2 - theta)/(1 - 2 theta + 9 theta squared - 4 theta cubed)},
  pdfauthor = {Manu Upadhyaya},
  pdfsubject = {Weak convergence of the Chambolle--Pock method in real Hilbert spaces},
  pdfkeywords = {Chambolle--Pock method, PDHG, weak convergence, Lyapunov analysis}
}
\usepackage{bookmark}
\usepackage[capitalize,nameinlink]{cleveref}

\providecommand{\keywords}[1]{\textbf{\textsf{Keywords.}} #1}
\providecommand{\MSC}[1]{\textbf{\textsf{Mathematics subject classification 2020.}} #1}

\makeatletter
\renewenvironment{abstract}{%
  \if@twocolumn
    \section*{{\sffamily\bfseries\abstractname}}%
  \else
    \small
    \begin{center}%
      {\sffamily\bfseries\abstractname\vspace{-.5em}\vspace{\z@}}%
    \end{center}%
    \begin{quote}%
      \setlength{\parindent}{0pt}%
      \noindent\ignorespaces
  \fi
}{\if@twocolumn\else\end{quote}\fi}
\makeatother

\newtheoremstyle{manuscript}%
  {0.3cm}% space above
  {0.3cm}% space below
  {\itshape}% body font
  {}% indentation
  {}% head font; supplied below
  {:}% punctuation after head
  {.5em}% space after head
  {%
    \thmname{\normalfont\sffamily\bfseries #1 }%
    \thmnumber{\normalfont\sffamily\bfseries #2}%
    \thmnote{\normalfont\sffamily\bfseries\ (#3)}%
  }

\theoremstyle{manuscript}
\newtheorem{theorem}{Theorem}[section]
\newtheorem{assumption}[theorem]{Assumption}
\newtheorem{lemma}[theorem]{Lemma}
\newtheorem{definition}[theorem]{Definition}
\newtheorem{proposition}[theorem]{Proposition}

\AddToHook{env/theorem/begin}{\crefalias{section}{theorem}}
\AddToHook{env/assumption/begin}{\crefalias{theorem}{assumption}}
\AddToHook{env/lemma/begin}{\crefalias{theorem}{lemma}}
\AddToHook{env/definition/begin}{\crefalias{theorem}{definition}}
\AddToHook{env/proposition/begin}{\crefalias{theorem}{proposition}}

\newlist{enumeratass}{enumerate}{1}
\setlist[enumeratass]{label=(\roman*),ref=\theassumption(\roman*)}
\newlist{enumeratlem}{enumerate}{1}
\setlist[enumeratlem]{label=(\roman*),ref=\thelemma(\roman*)}

\crefname{theorem}{Theorem}{Theorems}
\crefname{assumption}{Assumption}{Assumptions}
\crefname{enumeratassi}{Assumption}{Assumptions}
\crefname{lemma}{Lemma}{Lemmas}
\crefname{enumeratlemi}{Lemma}{Lemmas}
\crefname{definition}{Definition}{Definitions}
\crefname{proposition}{Proposition}{Propositions}

\title{\sffamily\bfseries The Chambolle--Pock method converges weakly with \(0 < \theta \le 1/2\) and \(\tau\sigma\|L\|^{2} < 4\theta(2-\theta)/(1 - 2\theta + 9\theta^{2} - 4\theta^{3})\) \\[2ex]}

\author{
    Manu Upadhyaya
    \\[2ex]
}

\date{%
    Inria, D.I. ENS, CNRS, PSL Research University, Paris, France \\
    CMAP, École Polytechnique, Institut Polytechnique de Paris, Palaiseau, France \\
    \texttt{\href{mailto:manu.upadhyaya@inria.fr}{manu.upadhyaya@inria.fr}}
}

\begin{document}

\maketitle

\begin{abstract}
    The Chambolle--Pock method, also known as the primal-dual hybrid gradient method, is a standard first-order algorithm for convex-concave saddle-point problems and composite convex optimization.
    We establish weak sequential convergence of its primal-dual iterates in real Hilbert spaces for every \(0<\theta\leq 1/2\) whenever \(\tau\sigma\|L\|^{2}<4\theta(2-\theta)/(1-2\theta+9\theta^{2}-4\theta^{3})\).
    This extends the weak-convergence theory to a previously unexplored range of extrapolation parameters.
\end{abstract}

\keywords{Chambolle--Pock method, PDHG, weak convergence, Lyapunov analysis}

\MSC{
    47J25, % Iterative procedures involving nonlinear operators
    49M29, % Numerical methods involving duality
    65K05, % Numerical mathematical programming methods
    90C25, % Convex programming
    93D30  % Lyapunov and storage functions
}

\section{Introduction}\label{sec:introduction}

The Chambolle--Pock method~\cite{chambolle_pock_2011_first_order_primal}, also known as the primal-dual hybrid gradient (PDHG) method,
solves convex-concave saddle-point problems of the form
\begin{equation}\label{eq:saddle_point_problem}
    \minimize_{x\in\calH}
    \maximize_{y\in\calG}\;
    f\Bigp{x} + \Biginner{Lx}{y} - g^*\Bigp{y},
\end{equation}
where \(g^*\) denotes the convex conjugate of \(g\).
This is a primal-dual formulation of the primal composite optimization problem
\begin{equation}\label{eq:primal_composite_problem}
    \minimize_{x\in\calH}\; f\Bigp{x} + g\Bigp{Lx}.
\end{equation}

Throughout this paper, we make the following assumptions.
\begin{assumption}
    \label{ass:main}
    \begin{enumeratass}
        \item \label{ass:hilbert_spaces} \(\calH\) and \(\calG\) are real Hilbert spaces.
        \item \label{ass:f} The function \(f:\calH\to\reals\cup\{+\infty\}\) is convex, proper, and lower semicontinuous.
        \item \label{ass:g} The function \(g:\calG\to\reals\cup\{+\infty\}\) is convex, proper, and lower semicontinuous.
        \item \label{ass:L} The operator \(L:\calH\to\calG\) is a bounded linear operator.
        \item \label{ass:KKT} There exists a point \(\Bigp{x^{\star},y^{\star}}\in\calH\times\calG\) such that
                \begin{equation}\label{eq:KKT_conditions}
                    \begin{aligned}
                        -L^{*}y^{\star} &\in \partial f\Bigp{x^{\star}}, \\
                        Lx^{\star} &\in \partial g^{*}\Bigp{y^{\star}}.
                    \end{aligned}
                \end{equation}
                We call such points \emph{KKT points}.
    \end{enumeratass}
\end{assumption}
Every KKT point \(\Bigp{x^{\star},y^{\star}}\) is a saddle point of \eqref{eq:saddle_point_problem}, with \(x^{\star}\) solving \eqref{eq:primal_composite_problem}. In particular, the Chambolle--Pock method searches for such KKT points by iterating
\begin{equation}\label{eq:chambolle_pock_iteration}
    \Bigp{\forall k\in\naturals}\quad
    \left[
    \begin{aligned}
        x^{k+1} &= \prox_{\tau f}\Bigp{x^{k} - \tau L^{*}y^{k}}, \\
        y^{k+1} &= \prox_{\sigma g^*}\Bigp{y^{k} + \sigma L\Bigp{x^{k+1} + \theta\Bigp{x^{k+1} - x^{k}}}},
    \end{aligned}
    \right.
\end{equation}
for some primal and dual step sizes \(\tau,\sigma\in\reals_{++}\), an extrapolation parameter \(\theta\in\reals\), and an initial point \(\Bigp{x^{0},y^{0}}\in\calH\times\calG\).

The original convergence result of Chambolle and Pock~\cite{chambolle_pock_2011_first_order_primal} treats the case \(\theta=1\) under the condition \(\tau\sigma\Bignorm{L}^{2}<1\).
For \(\theta=1\), the method was subsequently interpreted as a preconditioned proximal point method in~\cite{he_yuan_2012_convergence_analysis}.
Still for \(\theta=1\), sequence convergence under \(\tau\sigma\Bignorm{L}^{2}\leq 1\) was established using a modified proximal point analysis~\cite{condat_2013_primal_dual_splitting}, while later finite-dimensional results enlarged the admissible range to \(\tau\sigma\Bignorm{L}^{2}<4/3\)~\cite{ma_etal_2023_preconditioned_pdhg,yan_li_2024_improved_conditions}.
The most closely related result to the present paper is~\cite{banert_etal_2026_chambolle_pock_converges}, where weak convergence in Hilbert spaces is established for the regime \(\theta>1/2\) under the condition \(\tau\sigma\Bignorm{L}^{2}<4/(1+2\theta)\).

The main contribution of the present paper is weak sequential convergence of the Chambolle--Pock iterates for every \(0<\theta\le 1/2\) under the step-size condition stated below, a regime where weak convergence had not previously been established.

\begin{theorem}[Weak sequential convergence]
    \label{thm:main:sequence_convergence}
    Suppose that \Cref{ass:main} holds.
    Let \(\Bigp{\Bigp{x^{k},y^{k}}}_{k\in\naturals}\) be generated by \eqref{eq:chambolle_pock_iteration} with
    \begin{align}\label{eq:main:sequence_convergence:parameter_condition}
        \tau,\sigma > 0,\quad 0 < \theta \leq 1/2, \quad\text{and}\quad \tau\sigma\Bignorm{L}^{2} < \frac{4\theta(2-\theta)}{1 - 2\theta + 9\theta^{2} - 4\theta^{3}},
    \end{align}
    from an arbitrary initial point \(\Bigp{x^{0},y^{0}}\in\calH\times\calG\).
    Then there exists a KKT point \(\Bigp{x^{\star},y^{\star}}\in\calH\times\calG\) such that
    \begin{align*}
        \Bigp{x^{k}, y^{k}} \rightharpoonup \Bigp{x^{\star}, y^{\star}}.
    \end{align*}
\end{theorem}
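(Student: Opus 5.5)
We will prove the theorem with a Lyapunov (quadratic energy) argument followed by Opial's lemma. For \(\theta\neq 1\) the iteration is not a preconditioned proximal point step with a symmetric positive metric. So we work with an augmented state that also keeps the previous primal iterate, and look for an energy that decreases along the iterates.

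\emph{Step 1: monotonicity inequalities.} Fix any KKT point \((x^\star,y^\star)\). The prox steps give \(u^{k+1}:=(x^k-x^{k+1})/\tau - L^*y^k\in\partial f(x^{k+1})\) and \(v^{k+1}:=(y^k-y^{k+1})/\sigma + L\bar x^{k+1}\in\partial g^*(y^{k+1})\), where \(\bar x^{k+1}=x^{k+1}+\theta(x^{k+1}-x^k)\). Monotonicity of \(\partial f\) and \(\partial g^*\) between these points and the KKT point gives two inequalities. We will also use the inequalities between consecutive iterates, \(\langle u^{k+1}-u^k, x^{k+1}-x^k\rangle\ge0\) and \(\langle v^{k+1}-v^k,y^{k+1}-y^k\rangle\ge 0\). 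These are the extra ingredients needed to control the cross term \(\langle L(x^{k+1}-x^k),y^{k+1}-y^k\rangle\) when \(\theta<1\).

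\emph{Step 2: the energy.} We take a nonnegative combination of these inequalities with coefficients depending on \(\theta\). The target is an estimate
\[
V_{k+1}+\rho\bigl(\|x^{k+1}-x^k\|^2+\|y^{k+1}-y^k\|^2\bigr)\le V_k,
\]
with \(\rho>0\). Here \(V_k=Q\bigl(x^k-x^\star,\,y^k-y^\star,\,x^k-x^{k-1}\bigr)\) plus possibly a term \(\langle u^k-u^{k-1},x^k-x^{k-1}\rangle\)-like quantity, and \(Q\) is a quadratic form involving \(\tau^{-1}\|\cdot\|^2\), \(\sigma^{-1}\|\cdot\|^2\) and cross terms \(\langle L\cdot,\cdot\rangle\). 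The coefficients will be found by writing the difference \(V_k-V_{k+1}-\rho(\cdots)\) minus the combination of monotonicity inequalities as a quadratic form. We then require it to be positive semidefinite, together with \(Q\succeq c\|(x-x^\star,y-y^\star)\|^2\).

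By scaling \(x\) by \(\tau^{-1/2}\) and \(y\) by \(\sigma^{-1/2}\), everything depends only on \(\theta\) and \(\gamma:=\sqrt{\tau\sigma}\|L\|\). Via a singular value decomposition of the normalized operator, the conditions reduce to small scalar matrix inequalities in \((\theta,\gamma)\). We then pick multipliers explicitly as functions of \(\theta\). The expected outcome is that feasibility is exactly \(\gamma^2<4\theta(2-\theta)/(1-2\theta+9\theta^2-4\theta^3)\), for instance as a discriminant condition of a \(2\times2\) or \(3\times3\) block being positive. \textbf{This is the main obstacle}: guessing the right multipliers and energy so that the threshold comes out exactly. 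I would first solve the small semidefinite program numerically for several \(\theta\), read off the multipliers, fit rational functions of \(\theta\), and then verify the scalar inequalities symbolically, e.g. by an explicit LDL factorization.

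\emph{Step 3: conclusion.} Summing the descent inequality gives \(\sum_k\|x^{k+1}-x^k\|^2+\|y^{k+1}-y^k\|^2<\infty\). Since \(V_k\ge0\) is decreasing, \(V_k\) converges, so the iterates are bounded. Because the increments vanish, \(V_k\) differs from the metric quantity \(\|x^k-x^\star\|^2/\tau+\|y^k-y^\star\|^2/\sigma-2\alpha\langle L(x^k-x^\star),y^k-y^\star\rangle\) (for a fixed \(\alpha\) with \(\alpha^2\tau\sigma\|L\|^2<1\)) by terms tending to zero. That quantity is a squared norm \(\|z^k-z^\star\|_M^2\) in an equivalent Hilbert norm, so \(\lim_k\|z^k-z^\star\|_M\) exists for every KKT point \(z^\star\).

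For weak cluster points: \(u^{k+1}\in\partial f(x^{k+1})\), \(v^{k+1}\in\partial g^*(y^{k+1})\), and the residuals \((u^{k+1}+L^*y^{k+1},\,v^{k+1}-Lx^{k+1})\to0\) strongly because the increments vanish. The operator \((x,y)\mapsto(\partial f(x)+L^*y,\partial g^*(y)-Lx)\) is maximally monotone, so its graph is sequentially weak–strong closed. Hence every weak cluster point of \((x^k,y^k)\) is a KKT point. Opial's lemma in the \(M\)-norm then yields weak convergence of the whole sequence to a KKT point.
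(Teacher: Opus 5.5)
Your Step 3 matches the paper's Section 4: Opial's lemma in an equivalent metric, plus weak--strong closedness of the maximally monotone KKT operator. Step 2, however, has a genuine gap. The problem is not only that the certificate is left to a numerical search. The search space you set up contains no certificate at all for $0<\theta<1/2$.

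You use only two-point monotonicity inequalities for $\partial f$ and $\partial g^{*}$, an energy built from quadratic terms and such inner products, and maximal monotonicity. All of this holds verbatim if $\partial f,\partial g^{*}$ are replaced by arbitrary maximally monotone operators and the proximal maps by resolvents. So any certificate you find would prove convergence of the Chambolle--Pock iteration for monotone inclusions, and that is false for $\theta<1/2$.

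Take $\calH=\calG=\reals^{2}\cong\mathbb{C}$. Replace $\partial f$ by multiplication by $i\omega$ (a scaled rotation, skew and hence maximally monotone), replace $\partial g^{*}$ by $0$, and let $L=\Id$; the unique solution is $(0,0)$. The iteration is linear with matrix
\begin{align*}
    \begin{bmatrix}
        a & -\tau a\\
        \sigma\bigl((1+\theta)a-\theta\bigr) & 1-(1+\theta)\tau\sigma a
    \end{bmatrix},
    \qquad
    a=\frac{1}{1+i\tau\omega}.
\end{align*}
Expanding the eigenvalue near $1$ in powers of $a$ gives
$\abs{\lambda}^{2}=1+(1-2\theta)(\tau\sigma)^{2}(\tau\omega)^{-2}+O\bigl((\tau\omega)^{-3}\bigr)$.
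So for every $\theta<1/2$ and every $\tau\sigma>0$, the iterates diverge from generic initial points once $\omega$ is large. For instance, $\theta=1/4$, $\tau=\sigma=1$, $\omega=10$ gives spectral radius about $1.0024$, even though $\tau\sigma\|L\|^{2}=1$ is below the threshold $7/4$. Hence your SDP is infeasible on that range, and no choice of multipliers will produce the bound.

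The missing ingredient is convexity used through function values. The paper's Lyapunov function consists of
\begin{itemize}
  \item the quadratic part $\frac12\|z^{k}-z^{\star}\|_{P}^{2}-\frac14\|z^{k+1}-z^{k}\|_{P}^{2}$, where $P$ has off-diagonal blocks $-\frac{1+\theta}{2}L$;
  \item minus $\frac{1-\theta}{2}\mathcal{D}_{x^{\star},y^{\star}}(x^{k+1},y^{k+1})$;
  \item minus a bilinear cross term.
\end{itemize}
The descent step then adds one-sided subgradient inequalities, and their function values telescope against $\mathcal{D}_{x^{\star},y^{\star}}$:
\begin{itemize}
  \item $f(x^{k+2})\geq f(x^{k+1})+\langle u^{k+1},x^{k+2}-x^{k+1}\rangle$ with weight $\frac{1-\theta}{2}$;
  \item $f(x^{\star})\geq f(x^{k+1})+\langle u^{k+1},x^{\star}-x^{k+1}\rangle$ with weight $1$;
  \item the analogous inequalities for $g^{*}$.
\end{itemize}
In operator language, this exploits cyclic monotonicity of subdifferentials. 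If you replace $\mathcal{D}_{x^{\star},y^{\star}}$ by its pairwise-monotonicity upper bound, the descent step turns into a three-point cyclic inequality along $x^{\star}\to x^{k+1}\to x^{k+2}\to x^{\star}$. The rotation above violates exactly that inequality.

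With this $\mathcal{V}$, the residual consists of three pieces:
\begin{itemize}
  \item $\mathcal{D}_{x^{\star},y^{\star}}(x^{k+1},y^{k+1})$;
  \item $\frac{\theta}{4\tau}\bigl(\|x^{k+2}-x^{k+1}\|^{2}-\|K(x^{k+2}-x^{k+1})\|^{2}\bigr)$;
  \item two squares weighted by $\eta_{\pm}$.
\end{itemize}
The common numerator of $\eta_{\pm}$ is $4\theta(2-\theta)-\tau\sigma\|L\|^{2}(1-2\theta+9\theta^{2}-4\theta^{3})$, and requiring it to be positive is exactly the stated threshold.
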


The proof of \Cref{thm:main:sequence_convergence} is given in \Cref{sec:proof_sequence_convergence}.

To the best of our knowledge, combining \Cref{thm:main:sequence_convergence} with~\cite[Theorem~2]{banert_etal_2026_chambolle_pock_converges} yields the largest currently known parameter region guaranteeing weak convergence of the unmodified Chambolle--Pock method over this problem class:
\begin{align*}
    \tau,\sigma>0
    \qquad\text{and}\qquad
    \tau\sigma\Bignorm{L}^{2}
    <
    \begin{cases}
        \displaystyle
        \frac{4\theta(2-\theta)}
        {1-2\theta+9\theta^{2}-4\theta^{3}},
        & \text{if }0<\theta\leq \tfrac{1}{2}, \\[1ex]
        \displaystyle
        \frac{4}{1+2\theta},
        & \text{if }\theta>\tfrac{1}{2}.
    \end{cases}
\end{align*}
Moreover, \(4/(1+2\theta)\) is a universal upper bound on admissible values of \(\tau\sigma\Bignorm{L}^{2}\) over this problem class for every \(\theta>0\). Indeed, for every such \(\theta\), the construction in \cite[Section~7]{banert_etal_2026_chambolle_pock_converges} gives an instance for which the iterates fail to converge whenever \(\tau\sigma\Bignorm{L}^{2}\geq 4/(1+2\theta)\).
The same example also shows that, when \(\theta\leq 0\), no positive choice of \(\tau\) and \(\sigma\) guarantees convergence from every initial point.

The present paper is also connected to recent computer-assisted Lyapunov analysis tools that provide numerical convergence certificates by solving specific semidefinite programs.
In particular, the Lyapunov analysis presented here was partially obtained using the \texttt{AutoLyap} software suite~\cite{upadhyaya_etal_2026_auto_lyap_software_suite}, which builds on the computer-assisted methodology of~\cite{upadhyaya_etal_2025_automated_tight_lyapunov}.

Beyond its classical role in imaging and inverse problems~\cite{chambolle_pock_2016_introduction_continuous_optimization}, the Chambolle--Pock method, or PDHG, has recently become a central primitive in large-scale linear programming.
In particular, the PDLP line of work derives practical LP solvers from PDHG and enriches the basic iteration with diagonal preconditioning, adaptive step sizes, restart schemes, infeasibility detection, and hardware-conscious implementations; see, for example,~\cite{applegate_etal_2021_practical_large_scale_lp,applegate_etal_2024_infeasibility_detection_pdhg_lp,applegate_etal_2023_faster_first_order_primal_dual_lp,lu_yang_2025_cupdlp}.
These developments make it especially relevant to understand the behavior of the underlying unmodified algorithm.
Our result contributes at this foundational level: it establishes convergence of the basic Chambolle--Pock/PDHG iteration itself, without restart or additional correction mechanisms, in a parameter regime that had not previously been covered.

\subsection{Notation and preliminaries}\label{sec:notation_preliminaries}

Let \(\naturals\) denote the set of nonnegative integers, \(\reals\) the set of real numbers, \(\reals_+\) the set of nonnegative real numbers, and \(\reals_{++}\) the set of positive real numbers.

For a function \(f:\calH\to\reals\cup\{+\infty\}\), its \emph{effective domain} is \(\dom f=\{x\in\calH\mid f(x)<+\infty\}\), and \(f\) is \emph{proper} if \(\dom f\neq\emptyset\). If \(f\) is proper, its \emph{subdifferential} is the set-valued operator \(\partial f:\calH\to 2^{\calH}\) defined, for each \(x\in\calH\), by
\begin{equation}\label{eq:subdifferential_definition}
    \partial f(x)=\Bigset{u\in\calH\xmiddle|f(y)\geq f(x)+\Biginner{u}{y-x}\text{ for each }y\in\calH}.
\end{equation}
The function \(f\) is \emph{convex} if \(f((1-\lambda)x+\lambda y)\leq (1-\lambda)f(x)+\lambda f(y)\) for each \(x,y\in\calH\) and \(\lambda\in[0,1]\), and it is \emph{lower semicontinuous} if \(\liminf_{y\to x}f(y)\geq f(x)\) for each \(x\in\calH\).

Suppose that \(f\) is proper, convex, and lower semicontinuous, and let \(\gamma\in\reals_{++}\). The \emph{proximal operator} of \(\gamma f\) is the single-valued mapping \(\prox_{\gamma f}:\calH\to\calH\) given by \(\prox_{\gamma f}(x)=\argmin_{z\in\calH}\bigl(f(z)+(2\gamma)^{-1}\Bignorm{x-z}^{2}\bigr)\). Moreover, for each \(x\in\calH\),
\begin{align}\label{eq:prox_finite}
    f\Bigp{\prox_{\gamma f}\Bigp{x}} < + \infty.
\end{align}
See~\cite[Proposition 12.15]{bauschke_combettes_2017_convex_analysis_monotone}. The \emph{convex conjugate} of \(f\) is the function \(f^*:\calH\to\reals\cup\{+\infty\}\) given by \(f^*(u)=\sup_{x\in\calH}\bigl(\Biginner{u}{x}-f(x)\bigr)\) for each \(u\in\calH\).
We will use the following standard properties. By \cite[Propositions 16.6 and 16.44]{bauschke_combettes_2017_convex_analysis_monotone}, for each \(x,p\in\calH\),
\begin{equation}\label{eq:prox_characterization}
    p=\prox_{\gamma f}(x)
    \quad\Longleftrightarrow\quad
    \gamma^{-1}(x-p)\in\partial f(p).
\end{equation}
Moreover, by \cite[Corollary 13.38]{bauschke_combettes_2017_convex_analysis_monotone}, \(f^{*}\) is proper, convex, and lower semicontinuous, and by \cite[Proposition 16.16]{bauschke_combettes_2017_convex_analysis_monotone},
\(u\in\partial f(x)\) if and only if \(x\in\partial f^*(u)\), for each \(x,u\in\calH\).

An operator \(L:\calH\to\calG\) is \emph{linear} if \(L(\alpha x+\beta y)=\alpha Lx+\beta Ly\) for each \(x,y\in\calH\) and \(\alpha,\beta\in\reals\). It is \emph{bounded} if there exists \(M\in\reals_+\) such that \(\Bignorm{Lx}\leq M\Bignorm{x}\) for each \(x\in\calH\); the smallest such constant is the \emph{operator norm} \(\Bignorm{L}\).
If \(L\) is linear and bounded, its \emph{adjoint} is the unique bounded linear operator \(L^*:\calG\to\calH\) satisfying \(\Biginner{Lx}{y}=\Biginner{x}{L^*y}\) for each \(x\in\calH\) and \(y\in\calG\). When \(\calH=\calG\), the operator \(L\) is \emph{self-adjoint} if \(L=L^*\), \emph{positive} if \(\Biginner{Lx}{x}\geq0\) for each \(x\in\calH\), and \emph{strongly positive} if there exists \(\mu\in\reals_{++}\) such that \(\Biginner{Lx}{x}\geq\mu\Bignorm{x}^2\) for each \(x\in\calH\).

We recall the Cauchy--Schwarz inequality \(\abs{\Biginner{x}{y}}\leq\Bignorm{x}\Bignorm{y}\) and Young's inequality \(2\Biginner{x}{y}\leq\alpha\Bignorm{x}^{2}+\alpha^{-1}\Bignorm{y}^{2}\), valid for each \(x,y\in\calH\) and \(\alpha\in\reals_{++}\).
On \(\calH\times\calG\), we use the canonical inner product \(\Biginner{(x,y)}{(\bar x,\bar y)}=\Biginner{x}{\bar x}+\Biginner{y}{\bar y}\) and denote its induced norm by \(\Bignorm{\cdot}\).

\subsection{Organization}

The remainder of this manuscript is organized as follows. \Cref{sec:auxiliary_inequalities} establishes the auxiliary inequalities and metric properties needed for the analysis. \Cref{sec:lyapunov_analysis} develops the Lyapunov analysis. \Cref{sec:proof_sequence_convergence} proves \Cref{thm:main:sequence_convergence}. We conclude in \Cref{sec:conclusion}.

\section{Auxiliary inequalities and metric properties}\label{sec:auxiliary_inequalities}

This section establishes the parameter inequalities and metric properties used in the Lyapunov analysis of \Cref{sec:lyapunov_analysis}. These auxiliary results and the Lyapunov estimates remain valid throughout the interval \(0<\theta\leq 1\) under the non-strict step-size condition
\begin{align}\label{eq:lyapunov:parameter_condition}
    \tau,\sigma > 0,\quad 0 < \theta \leq 1, \quad\text{and}\quad \tau\sigma\Bignorm{L}^{2} \leq \frac{4\theta(2-\theta)}{1 - 2\theta + 9\theta^{2} - 4\theta^{3}}.
\end{align}
The denominator in \eqref{eq:lyapunov:parameter_condition} is positive, since \(1-2\theta+9\theta^{2}-4\theta^{3}=(1-\theta)^{2}+4\theta^{2}(2-\theta)>0\) for \(0<\theta\leq 1\).
Whenever strictness is required, we assume that the last inequality in \eqref{eq:lyapunov:parameter_condition} is strict.
Under this assumption, the argument in \Cref{sec:proof_sequence_convergence} also yields weak convergence throughout \(0<\theta\leq 1\).
For \(\theta>1/2\), however,~\cite[Theorem~2]{banert_etal_2026_chambolle_pock_converges} permits the larger step-size bound \(\tau\sigma\Bignorm{L}^{2}<4/(1+2\theta)\).
We therefore state \Cref{thm:main:sequence_convergence} only for the previously uncovered range \(0<\theta\leq 1/2\).

\begin{lemma}
    \label{lem:parameters}
    The parameter condition \eqref{eq:lyapunov:parameter_condition} implies that
    \begin{align}\label{eq:inequality_lemma:1}
        4\theta(2-\theta) - \tau\sigma\Bignorm{L}^{2}\Bigp{1 - 2\theta + 9\theta^{2} - 4\theta^{3}} \geq 0
    \end{align}
    and
    \begin{align}\label{eq:inequality_lemma:2}
        \tau\sigma\Bignorm{L}^{2} (1+\theta)^{2} \leq 4.
    \end{align}
    Similarly, if the last inequality in \eqref{eq:lyapunov:parameter_condition} is strict, then the inequalities in \eqref{eq:inequality_lemma:1} and \eqref{eq:inequality_lemma:2} are strict.
\end{lemma}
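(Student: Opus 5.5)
Write \(D(\theta)=1-2\theta+9\theta^{2}-4\theta^{3}\) and \(s=\tau\sigma\Bignorm{L}^{2}\). The plan is to get \eqref{eq:inequality_lemma:1} by rearranging the bound on \(s\), and \eqref{eq:inequality_lemma:2} by comparing \(4\theta(2-\theta)/D(\theta)\) with \(4/(1+\theta)^{2}\).

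For \eqref{eq:inequality_lemma:1}, we use the fact, noted above, that \(D(\theta)=(1-\theta)^{2}+4\theta^{2}(2-\theta)>0\) for \(0<\theta\leq 1\). Multiplying \(s\leq 4\theta(2-\theta)/D(\theta)\) by \(D(\theta)\) therefore preserves the inequality, and rearranging gives \eqref{eq:inequality_lemma:1}. The same step preserves strictness.

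For \eqref{eq:inequality_lemma:2}, it suffices to prove the parameter inequality
\begin{align*}
    \frac{4\theta(2-\theta)}{D(\theta)} \leq \frac{4}{(1+\theta)^{2}},
    \quad\text{i.e.}\quad
    \theta(2-\theta)(1+\theta)^{2} \leq D(\theta)
    \qquad\text{for } 0<\theta\leq 1,
\end{align*}
since then \(s(1+\theta)^{2}\leq 4\theta(2-\theta)(1+\theta)^{2}/D(\theta)\leq 4\). If the bound on \(s\) is strict, the first of these inequalities is strict, so \eqref{eq:inequality_lemma:2} is strict as well.

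The only real work is checking this polynomial inequality. Expand \((1+\theta)^{2}=1+2\theta+\theta^{2}\) and \(\theta(2-\theta)=2\theta-\theta^{2}\). Their product is \(2\theta+3\theta^{2}-\theta^{4}\). Hence
\begin{align*}
    D(\theta)-\theta(2-\theta)(1+\theta)^{2}
    = \theta^{4}-4\theta^{3}+6\theta^{2}-4\theta+1
    = (1-\theta)^{4}\geq 0.
\end{align*}
This perfect-fourth-power identity is what makes the argument close. Equality holds only at \(\theta=1\), which causes no trouble, because strictness of \eqref{eq:inequality_lemma:2} comes from strictness of the bound on \(s\), not from this comparison.
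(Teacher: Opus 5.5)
Your proposal is correct and takes the same approach as the paper: you obtain \eqref{eq:inequality_lemma:1} by multiplying through by the positive denominator, and you reduce \eqref{eq:inequality_lemma:2} to the comparison $4\theta(2-\theta)/D(\theta)\le 4/(1+\theta)^{2}$, which is equivalent to $(1-\theta)^{4}\ge 0$. Your explicit expansion checks out, and your remark that strictness comes from the bound on $\tau\sigma\Bignorm{L}^{2}$ rather than from the comparison fills in details the paper leaves implicit.
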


\begin{proof}
    The first inequality \eqref{eq:inequality_lemma:1} follows by multiplying both sides of the last inequality in \eqref{eq:lyapunov:parameter_condition} by its positive denominator and rearranging. For the second inequality, note that
    \begin{align*}
        \frac{4\theta(2-\theta)}{1 - 2\theta + 9\theta^{2} - 4\theta^{3}}
        \le
        \frac{4}{(1+\theta)^{2}}
    \end{align*}
    is equivalent to \((1-\theta)^{4}\ge 0\), which gives \eqref{eq:inequality_lemma:2}.
\end{proof}

\begin{definition}
    \label{def:P}
    Suppose that \Cref{ass:hilbert_spaces,ass:L} hold, and let \(\tau,\sigma\in\reals_{++}\) and \(\theta\in\reals\).
    Define the bounded linear operator
    \begin{align*}
        P
        =
        \begin{bmatrix}
            \frac{1}{\tau}\Id & -\frac{1+\theta}{2}L^{*} \\[1ex]
            -\frac{1+\theta}{2}L & \frac{1}{\sigma}\Id
        \end{bmatrix},
    \end{align*}
    on \(\calH\times\calG\), the bilinear form
    \begin{align*}
        \Bigp{\forall z_1, z_2\in\calH\times\calG}\quad \Biginner{z_1}{z_2}_{P} = \Biginner{z_1}{Pz_2},
    \end{align*}
    and the associated quadratic form
    \begin{align*}
        \Bigp{\forall (x,y)\in\calH\times\calG}\quad
        \Bignorm{(x,y)}_{P}^{2}
        =
        \Biginner{(x,y)}{(x,y)}_{P}
        =
        \frac{1}{\tau}\Bignorm{x}^{2}
        + \frac{1}{\sigma}\Bignorm{y}^{2}
        - (1+\theta)\Biginner{Lx}{y}.
    \end{align*}
\end{definition}

\begin{lemma}
    \label{lem:P_properties}
    Suppose that \Cref{ass:hilbert_spaces,ass:L} hold, and let \(\tau,\sigma\in\reals_{++}\) and \(\theta\in\reals\).
    \begin{enumeratlem}
        \item\label{lem:P_properties:self_adjoint}
        The operator \(P\) from \Cref{def:P} is self-adjoint.
        \item\label{lem:P_properties:positive}
        If \eqref{eq:lyapunov:parameter_condition} holds, then \(P\) is positive on \(\calH\times\calG\) and \(\Bignorm{\cdot}_{P}\) is a seminorm on \(\calH\times\calG\).
        \item\label{lem:P_properties:strict}
        If the last inequality in \eqref{eq:lyapunov:parameter_condition} is strict, then \(P\) is strongly positive on \(\calH\times\calG\).
        Consequently, \(P\) is bijective, \(\Biginner{\cdot}{\cdot}_{P}\) is an inner product on \(\calH\times\calG\), and \(\Bignorm{\cdot}_{P}\) is a norm equivalent to the canonical product norm \(\Bignorm{\cdot}\) on \(\calH\times\calG\).
        In particular, the Hilbert spaces \(\Bigp{\calH\times\calG, \inner{\cdot}{\cdot}}\) and \(\Bigp{\calH\times\calG, \inner{\cdot}{\cdot}_{P}}\) have the same weakly convergent sequences and their corresponding weak limits are the same in both spaces.
    \end{enumeratlem}
\end{lemma}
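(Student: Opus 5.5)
Part (i) is immediate from the block structure of \(P\) in \Cref{def:P}. The diagonal blocks \(\tau^{-1}\Id\) and \(\sigma^{-1}\Id\) are self-adjoint. The off-diagonal blocks \(-\tfrac{1+\theta}{2}L^{*}\) and \(-\tfrac{1+\theta}{2}L\) are adjoints of each other. Hence for \(z_1=(x_1,y_1)\) and \(z_2=(x_2,y_2)\) one checks directly, using \(\inner{Lx}{y}=\inner{x}{L^*y}\), that \(\inner{z_1}{Pz_2}=\inner{Pz_1}{z_2}\).

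For (ii) and (iii), I will bound the cross term. By Cauchy--Schwarz and Young with parameter \(\alpha>0\),
\begin{align*}
(1+\theta)\abs{\inner{Lx}{y}}\le (1+\theta)\Bignorm{L}\Bignorm{x}\Bignorm{y}\le \tfrac{(1+\theta)\Bignorm{L}}{2}\Bigp{\alpha\Bignorm{x}^{2}+\alpha^{-1}\Bignorm{y}^{2}}.
\end{align*}
Choose \(\alpha=\sqrt{\sigma/\tau}\); the case \(\Bignorm{L}=0\) is trivial. This gives
\begin{align*}
\Bignorm{(x,y)}_{P}^{2}\ge \Bigp{1-\tfrac{(1+\theta)\Bignorm{L}\sqrt{\tau\sigma}}{2}}\Bigp{\tfrac1\tau\Bignorm{x}^{2}+\tfrac1\sigma\Bignorm{y}^{2}}.
\end{align*}
By \eqref{eq:inequality_lemma:2} of \Cref{lem:parameters}, the factor \(c:=1-\tfrac{(1+\theta)\Bignorm{L}\sqrt{\tau\sigma}}{2}\) is nonnegative, so \(P\) is positive. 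Together with self-adjointness, \(\inner{\cdot}{\cdot}_P\) is then a symmetric positive semidefinite bilinear form. Its induced \(\Bignorm{\cdot}_P\) is a seminorm: the triangle inequality follows from the Cauchy--Schwarz inequality for semi-inner products, and homogeneity is clear.

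Under strictness, \Cref{lem:parameters} gives \(c>0\), so \(\inner{Pz}{z}\ge c\min\{\tau^{-1},\sigma^{-1}\}\Bignorm{z}^{2}\), which is strong positivity. Also \(\Bignorm{z}_P^2\le\Bignorm{P}\Bignorm{z}^2\), so \(\Bignorm{\cdot}_P\) and \(\Bignorm{\cdot}\) are equivalent norms. Bijectivity follows from Lax--Milgram, since a strongly positive self-adjoint bounded operator is coercive. Equivalently, \(P\) is injective with closed range, and its range is dense because \(\ker P^*=\ker P=\{0\}\). Norm equivalence also makes \((\calH\times\calG,\inner{\cdot}{\cdot}_P)\) complete, hence a Hilbert space.

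For the weak convergence claim, note that \(z^k\rightharpoonup z\) in the \(P\)-space means \(\inner{z^k-z}{Pw}\to0\) for every \(w\). Since \(P\) is surjective, \(\{Pw\}\) ranges over all of \(\calH\times\calG\), so this is exactly canonical weak convergence; the converse is immediate. The limits therefore coincide.

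The only step requiring care is the choice of Young weight and the invocation of \eqref{eq:inequality_lemma:2}. The surjectivity argument linking the two weak topologies is the main conceptual point.
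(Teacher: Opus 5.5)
Your proposal is correct and follows the paper's proof essentially step for step. It uses the same Cauchy--Schwarz/Young bound (your weight \(\alpha=\sqrt{\sigma/\tau}\) reproduces exactly the paper's two-sided estimate), the same appeal to \eqref{eq:inequality_lemma:2}, Lax--Milgram for surjectivity, and the same surjectivity argument identifying the two weak topologies. Your additional remarks (the triangle inequality via Cauchy--Schwarz for semi-inner products, completeness of the \(P\)-space, and the closed-range/dense-range alternative for surjectivity) fill in details the paper leaves implicit without changing the route.
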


\begin{proof}
    \leavevmode
    \begin{itemize}[leftmargin=5em,labelwidth=4em,labelsep=0.5em]
        \item[\ref{lem:P_properties:self_adjoint}.]
        This is immediate from the block representation of \(P\).

        \item[\ref{lem:P_properties:positive}.]
        Let \((x,y)\in\calH\times\calG\). By Cauchy--Schwarz and Young's inequality,
        \begin{align*}
            (1+\theta)\abs{\Biginner{Lx}{y}}
            \le
            (1+\theta)\Bignorm{L}\Bignorm{x}\Bignorm{y}
            \le
            \frac{\sqrt{\tau\sigma}(1+\theta)\Bignorm{L}}{2}
            \Bigp{
                \frac{1}{\tau}\Bignorm{x}^{2} + \frac{1}{\sigma}\Bignorm{y}^{2}
            }.
        \end{align*}
        Therefore,
        \begin{equation}\label{eq:two_sided_estimate_P}
            \begin{aligned}
                & \Bigp{
                    1 - \frac{\sqrt{\tau\sigma}(1+\theta)\Bignorm{L}}{2}
                }
                \min\Bigp{\frac{1}{\tau},\frac{1}{\sigma}}
                \Bigp{
                    \Bignorm{x}^{2} + \Bignorm{y}^{2}
                } \\
                & \le \Bignorm{(x,y)}_{P}^{2} \\
                & \le
                \Bigp{
                    1 + \frac{\sqrt{\tau\sigma}(1+\theta)\Bignorm{L}}{2}
                }
                \max\Bigp{\frac{1}{\tau},\frac{1}{\sigma}}
                \Bigp{
                    \Bignorm{x}^{2} + \Bignorm{y}^{2}
                }.
            \end{aligned}
        \end{equation}
        If \eqref{eq:lyapunov:parameter_condition} holds, then \eqref{eq:inequality_lemma:2} in \Cref{lem:parameters} gives
        \begin{align*}
            1 - \frac{\sqrt{\tau\sigma}(1+\theta)\Bignorm{L}}{2} \ge 0,
        \end{align*}
        and from \eqref{eq:two_sided_estimate_P} we conclude that
        \begin{align*}
            \Bignorm{(x,y)}_{P}^{2} \ge 0,
        \end{align*}
        i.e., \(P\) is positive and \(\Bignorm{\cdot}_{P}\) is a seminorm on \(\calH\times\calG\).

        \item[\ref{lem:P_properties:strict}.]
        If the last inequality in \eqref{eq:lyapunov:parameter_condition} is strict, then the strict version of \eqref{eq:inequality_lemma:2} in \Cref{lem:parameters} gives
        \begin{align*}
            1 - \frac{\sqrt{\tau\sigma}(1+\theta)\Bignorm{L}}{2} > 0,
        \end{align*}
        and from \eqref{eq:two_sided_estimate_P} we conclude that \(P\) is strongly positive, which implies injectivity.
        Since \(P\) is self-adjoint by \Cref{lem:P_properties:self_adjoint} and strongly positive, \(\Biginner{\cdot}{\cdot}_{P}\) is an inner product on \(\calH\times\calG\).
        Moreover, \eqref{eq:two_sided_estimate_P} shows that \(\Bignorm{\cdot}_{P}\) is equivalent to the canonical product norm on \(\calH\times\calG\).
        Next, the Lax--Milgram theorem \cite[Example 27.12]{bauschke_combettes_2017_convex_analysis_monotone} and the Riesz--Fr{\'e}chet representation theorem \cite[Fact 2.24]{bauschke_combettes_2017_convex_analysis_monotone} imply that \(P\) is surjective.
        Finally, let \(\p{z^k}_{k\in\naturals}\) be a sequence in \(\calH\times\calG\) and let \(z\in\calH\times\calG\).
        Since \(P\) is surjective, we get
        \begin{align*}
            \begin{aligned}
            z^k \rightharpoonup z \text{ in } (\calH\times\calG,\langle\cdot,\cdot\rangle_P)
            &\iff (\forall u\in \calH\times\calG)\ \langle z^k-z,u\rangle_P \to 0\\
            &\iff (\forall u\in \calH\times\calG)\ \langle z^k-z,Pu\rangle \to 0\\
            &\iff (\forall v\in \calH\times\calG)\ \langle z^k-z,v\rangle \to 0\\
            &\iff z^k \rightharpoonup z \text{ in } (\calH\times\calG,\langle\cdot,\cdot\rangle).
            \end{aligned}
        \end{align*}
        Therefore, the weak topologies induced by \(\langle\cdot,\cdot\rangle_P\) and by the canonical
        inner product \(\langle\cdot,\cdot\rangle\) coincide. In particular, the two Hilbert space structures
        have the same weakly convergent sequences, and a sequence has the same weak limit in both
        spaces.
    \end{itemize}
\end{proof}

\section{Lyapunov analysis}\label{sec:lyapunov_analysis}

Under \eqref{eq:lyapunov:parameter_condition}, we develop the Lyapunov estimates used to prove \Cref{thm:main:sequence_convergence}.
To formulate these estimates, let
\begin{align*}
    \mathcal{L}\Bigp{x,y} = f\Bigp{x} + \Biginner{y}{L x} - g^* \Bigp{y}
\end{align*}
denote the Lagrangian associated with \eqref{eq:saddle_point_problem}.
For a KKT point \(\Bigp{x^{\star}, y^{\star}} \in \calH \times \calG\), define the \emph{duality-gap function} \(\mathcal{D}_{x^{\star},y^{\star}}:\calH\times\calG \to \reals\cup\set{+\infty}\) by
\begin{align}\label{eq:pd_gap_fcn}
    \Bigp{\forall \Bigp{x,y}\in\calH\times\calG}\quad \mathcal{D}_{x^{\star},y^{\star}}\Bigp{x,y} = \mathcal{L}\Bigp{x,y^{\star}}-\mathcal{L}\Bigp{x^{\star},y}.
\end{align}
This function is finite on \(\dom f \times \dom g^{*}\) and nonnegative on \(\calH\times\calG\); in particular,
\begin{align}\label{eq:D_finite}
    \Bigp{\forall \Bigp{x,y} \in \dom f \times \dom g^{*}}\quad &\mathcal{D}_{x^{\star},y^{\star}}\Bigp{x,y} < +\infty,
    \\ \label{eq:D_nonnegative}
    \Bigp{\forall \Bigp{x,y} \in \calH \times \calG}\quad &\mathcal{D}_{x^{\star},y^{\star}}\Bigp{x,y} \geq 0.
\end{align}

\begin{definition}[Lyapunov function]
    \label{def:lyapunov_function}
    Suppose that \Cref{ass:main} holds.
    Let \(\Bigp{\Bigp{x^{k},y^{k}}}_{k\in\naturals}\) be generated by \eqref{eq:chambolle_pock_iteration}, and fix a KKT point \(\Bigp{x^{\star},y^{\star}}\in\calH\times\calG\). Let \(\mathcal{D}_{x^{\star},y^{\star}}\) be the duality gap function from \eqref{eq:pd_gap_fcn}, and let \(\Bignorm{\cdot}_{P}\) be the seminorm from \Cref{lem:P_properties:positive} when \eqref{eq:lyapunov:parameter_condition} holds and the norm from \Cref{lem:P_properties:strict} when the last inequality in \eqref{eq:lyapunov:parameter_condition} is strict.
    Define the \emph{Lyapunov function} \(\mathcal{V}:\naturals\to\reals\) by
    \begin{equation}\label{eq:lyapunov_function}
            \Bigp{\forall k \in \naturals} \quad \mathcal{V}(k) {}={}
            \begin{aligned}[t]
                &\frac{1}{2}\Bignorm{\Bigp{x^{k} - x^{\star}, y^{k} - y^{\star}}}_{P}^{2}
                - \frac{1}{4}\Bignorm{\Bigp{x^{k+1} - x^{k}, y^{k+1} - y^{k}}}_{P}^{2} \\
                &- \frac{1-\theta}{2} \mathcal{D}_{x^{\star},y^{\star}}\Bigp{x^{k+1},y^{k+1}} \\
                &-
                \frac{1-\theta}{2}
                \left(\vphantom{\Biginner{y^{k} - y^{\star}}{L\Bigp{x^{k+1} - x^{k}}}}\right.
                    \Biginner{y^{k} - y^{\star}}{L\Bigp{x^{k+1} - x^{k}}}
                    - \Biginner{L\Bigp{x^{k} - x^{\star}}}{y^{k+1} - y^{k}}
                \left.\vphantom{\Biginner{y^{k} - y^{\star}}{L\Bigp{x^{k+1} - x^{k}}}}\right).
            \end{aligned}
    \end{equation}
\end{definition}

\begin{proposition}[Lyapunov lower bound]
    \label{prop:lyapunov_lower_bound}
    Suppose that \Cref{ass:main} holds.\hfill\break
    Let \(\Bigp{\Bigp{x^{k},y^{k}}}_{k\in\naturals}\) be generated by \eqref{eq:chambolle_pock_iteration} with \eqref{eq:lyapunov:parameter_condition}, \(\Bigp{x^{\star},y^{\star}}\in\calH\times\calG\) satisfy \eqref{eq:KKT_conditions}, and \(\mathcal{V}\) be given by \Cref{def:lyapunov_function}.
    Then
    \begin{equation}\label{eq:lyapunov_lower_bound}
    \begin{aligned}
        \Bigp{\forall k\in\naturals}\quad
        \mathcal{V}(k)
        \geq
        \frac{1}{2}
        \Bignorm{\Bigp{x^{k+1} - x^{\star}, y^{k+1} - y^{\star}}}_{P}^{2}
        \geq 0.
    \end{aligned}
    \end{equation}
\end{proposition}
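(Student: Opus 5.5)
We set \(z^{k}=(x^{k},y^{k})\), \(z^{\star}=(x^{\star},y^{\star})\), \(u=z^{k}-z^{\star}\) and \(d=z^{k+1}-z^{k}=(d_x,d_y)\). Since \(P\) is self-adjoint by \Cref{lem:P_properties:self_adjoint}, we have \(\frac12\Bignorm{u+d}_P^2=\frac12\Bignorm{u}_P^2+\Biginner{u}{d}_P+\frac12\Bignorm{d}_P^2\). Hence the first inequality in \eqref{eq:lyapunov_lower_bound} is equivalent to
\begin{align*}
    \Delta_k
    :={}&
    -\Biginner{u}{d}_P-\frac34\Bignorm{d}_P^2-\frac{1-\theta}{2}\mathcal{D}_{x^{\star},y^{\star}}\Bigp{x^{k+1},y^{k+1}}
    \\
    &-\frac{1-\theta}{2}\Bigp{\Biginner{y^{k}-y^{\star}}{Ld_x}-\Biginner{L(x^{k}-x^{\star})}{d_y}}
    \geq 0 .
\end{align*}
The second inequality is immediate from \Cref{lem:P_properties:positive}. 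The plan is to bound the function-value terms by linear and quadratic expressions in \(u\) and \(d\), and then show that the \(u\)-dependence disappears.

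First, we use \eqref{eq:prox_characterization} to read off the subgradients generated by the iteration:
\begin{align*}
    v^{k+1} &:= \tau^{-1}\Bigp{x^{k}-x^{k+1}}-L^{*}y^{k}\in\partial f\Bigp{x^{k+1}},
    \\
    w^{k+1} &:= \sigma^{-1}\Bigp{y^{k}-y^{k+1}}+L\Bigp{x^{k+1}+\theta d_x}\in\partial g^{*}\Bigp{y^{k+1}}.
\end{align*}
These points lie in \(\dom f\times\dom g^{*}\) by \eqref{eq:prox_finite}, so every quantity below is finite by \eqref{eq:D_finite}. Writing out \(\mathcal{D}_{x^{\star},y^{\star}}(x^{k+1},y^{k+1})\) and using the subgradient inequalities of \(f\) and \(g^{*}\) at \(x^{k+1}\) and \(y^{k+1}\), evaluated at \(x^{\star}\) and \(y^{\star}\), gives the upper bound
\begin{align*}
    \mathcal{D}_{x^{\star},y^{\star}}\Bigp{x^{k+1},y^{k+1}}
    \le{}&
    \Biginner{v^{k+1}}{x^{k+1}-x^{\star}}+\Biginner{w^{k+1}}{y^{k+1}-y^{\star}}
    \\
    &+\Biginner{y^{\star}}{Lx^{k+1}}-\Biginner{y^{k+1}}{Lx^{\star}}.
\end{align*}
Because \(0<\theta\le1\), the coefficient \(-(1-\theta)/2\) is nonpositive, so this bound can be inserted into \(\Delta_k\).

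This bound alone will not control the term \(-\Biginner{u}{d}_P\), which is linear in \(u\). So we also add the monotonicity inequalities pairing \((v^{k+1},w^{k+1})\) with the KKT subgradients \((-L^{*}y^{\star},Lx^{\star})\) from \eqref{eq:KKT_conditions}. These are
\begin{align*}
    \Biginner{v^{k+1}+L^{*}y^{\star}}{x^{k+1}-x^{\star}}\ge0
    \quad\text{and}\quad
    \Biginner{w^{k+1}-Lx^{\star}}{y^{k+1}-y^{\star}}\ge0,
\end{align*}
and we take them with a nonnegative multiplier, which I expect to be \((1+\theta)/2\) so that together with the \((1-\theta)/2\) above the weights sum to \(1\). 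Substituting the formulas for \(v^{k+1},w^{k+1}\) and expanding everything in \(u\) and \(d\), the design of \(\mathcal{V}\), and in particular its skew cross term, should make all terms bilinear in \((u,d)\) and all terms quadratic in \(u\) cancel. What remains should be \(\Delta_k\ge Q(d)\) for a quadratic form
\begin{align*}
    Q(d)=a\Bignorm{d_x}^2+b\Bignorm{d_y}^2-c\Biginner{Ld_x}{d_y},
\end{align*}
with \(a,b>0\) and \(c\) depending only on \(\tau,\sigma,\theta\).

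The final step is to show \(Q(d)\ge0\). By Cauchy--Schwarz and Young, exactly as in the proof of \Cref{lem:P_properties:positive}, it suffices to have \(c^2\Bignorm{L}^2\le 4ab\). I expect this to reduce, after clearing denominators, precisely to \eqref{eq:inequality_lemma:1} in \Cref{lem:parameters}.

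The main obstacle is the bookkeeping in the middle step: choosing the multipliers and checking that the \(u\)-terms cancel exactly. If the \(u\)-dependence does not cancel with the first choice, I would treat the multipliers of the monotonicity inequalities as unknowns. I would then fix them by matching the coefficients of \(\Biginner{x^{k}-x^{\star}}{d_x}\), \(\Biginner{y^{k}-y^{\star}}{d_y}\), \(\Biginner{L(x^{k}-x^{\star})}{d_y}\) and \(\Biginner{y^{k}-y^{\star}}{Ld_x}\) to zero.
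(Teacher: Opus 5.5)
Your proposal is correct and is essentially the paper's proof. Bounding \(\mathcal{D}_{x^{\star},y^{\star}}(x^{k+1},y^{k+1})\) with weight \(\frac{1-\theta}{2}\) via the subgradient inequalities at \((x^{k+1},y^{k+1})\), and adding the monotonicity inequalities with weight \(\frac{1+\theta}{2}\), is exactly the paper's combination: weight \(1\) on the subgradient inequalities at \(x^{k+1},y^{k+1}\) and weight \(\frac{1+\theta}{2}\) on those at \(x^{\star},y^{\star}\). The \(u\)-terms do cancel.

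One correction: the residual is exactly \(Q(d)=\frac14\Bignorm{(x^{k+1}-x^{k},y^{k+1}-y^{k})}_P^2\), so \(a=\frac{1}{4\tau}\), \(b=\frac{1}{4\sigma}\), \(c=\frac{1+\theta}{4}\). The condition \(c^2\Bignorm{L}^2\le 4ab\) is therefore \eqref{eq:inequality_lemma:2} (i.e.\ \Cref{lem:P_properties:positive}), not \eqref{eq:inequality_lemma:1}; the latter is only needed later, for the decrease in \Cref{prop:lyapunov_inequality}.
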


\begin{proof}
    By the characterization of the proximal operator in \eqref{eq:prox_characterization}, \eqref{eq:chambolle_pock_iteration} is equivalent to
    \begin{align*}
        \frac{1}{\tau}x^{k} - L^{*}y^{k} - \frac{1}{\tau}x^{k+1} &\in \partial f\Bigp{x^{k+1}}, \\
        - \theta Lx^{k} + \frac{1}{\sigma}y^{k} + (1+\theta)Lx^{k+1} - \frac{1}{\sigma}y^{k+1} &\in \partial g^*\Bigp{y^{k+1}}.
    \end{align*}
    Hence, by the definition of the subdifferential in \eqref{eq:subdifferential_definition} and \eqref{eq:KKT_conditions},
    \begingroup
    \allowdisplaybreaks
    \begin{align*}
        \mathcal{V}(k)
        \geq{}&
        \begin{aligned}[t]
            &\mathcal{V}(k) \\
            &+ \frac{1+\theta}{2}
            \underbracket{
                \Bigp{
                    f\Bigp{x^{\star}} - f\Bigp{x^{k+1}}
                    - \Biginner{y^{\star}}{Lx^{k+1} - Lx^{\star}}
                }
            }_{\le 0} \\
            &+ \underbracket{
                f\Bigp{x^{k+1}} - f\Bigp{x^{\star}}
                + \Biginner{\frac{1}{\tau}x^{k} - L^{*}y^{k} - \frac{1}{\tau}x^{k+1}}{x^{\star} - x^{k+1}}
            }_{\le 0} \\
            &+ \frac{1+\theta}{2}
            \underbracket{
                \Bigp{
                    g^*\Bigp{y^{\star}} - g^*\Bigp{y^{k+1}}
                    + \Biginner{Lx^{\star}}{y^{k+1} - y^{\star}}
                }
            }_{\le 0} \\
            &+ \underbracket{
                g^*\Bigp{y^{k+1}} - g^*\Bigp{y^{\star}}
                + \Biginner{-\theta Lx^{k} + \frac{1}{\sigma}y^{k} + (1+\theta)Lx^{k+1} - \frac{1}{\sigma}y^{k+1}}{y^{\star} - y^{k+1}}
            }_{\le 0}
        \end{aligned} \\
        ={}&
        \begin{aligned}[t]
            &\frac{1}{2}\Bignorm{\Bigp{x^{k} - x^{\star}, y^{k} - y^{\star}}}_{P}^{2}
            - \frac{1}{4}\Bignorm{\Bigp{x^{k+1} - x^{k}, y^{k+1} - y^{k}}}_{P}^{2} \\
            &- \frac{1-\theta}{2}
            \left(\vphantom{\Biginner{y^{\star}}{Lx^{k+1} - Lx^{\star}}}\right.
                f\Bigp{x^{k+1}} - f\Bigp{x^{\star}}
                + \Biginner{y^{\star}}{Lx^{k+1} - Lx^{\star}} \\
            &\hspace{4.5em}
                + g^*\Bigp{y^{k+1}} - g^*\Bigp{y^{\star}}
                - \Biginner{Lx^{\star}}{y^{k+1} - y^{\star}}
            \left.\vphantom{\Biginner{y^{\star}}{Lx^{k+1} - Lx^{\star}}}\right) \\
            &- \frac{1-\theta}{2}
            \left(\vphantom{\Biginner{y^{k} - y^{\star}}{L\Bigp{x^{k+1} - x^{k}}}}\right.
                \Biginner{y^{k} - y^{\star}}{L\Bigp{x^{k+1} - x^{k}}}
                - \Biginner{L\Bigp{x^{k} - x^{\star}}}{y^{k+1} - y^{k}}
            \left.\vphantom{\Biginner{y^{k} - y^{\star}}{L\Bigp{x^{k+1} - x^{k}}}}\right) \\
            &+ \frac{1+\theta}{2}
            \left(\vphantom{\Biginner{y^{\star}}{Lx^{k+1} - Lx^{\star}}}\right.
                f\Bigp{x^{\star}} - f\Bigp{x^{k+1}}
                - \Biginner{y^{\star}}{Lx^{k+1} - Lx^{\star}}
            \left.\vphantom{\Biginner{y^{\star}}{Lx^{k+1} - Lx^{\star}}}\right) \\
            &+ f\Bigp{x^{k+1}} - f\Bigp{x^{\star}}
            + \Biginner{\frac{1}{\tau}x^{k} - L^{*}y^{k} - \frac{1}{\tau}x^{k+1}}{x^{\star} - x^{k+1}} \\
            &+ \frac{1+\theta}{2}
            \left(\vphantom{\Biginner{Lx^{\star}}{y^{k+1} - y^{\star}}}\right.
                g^*\Bigp{y^{\star}} - g^*\Bigp{y^{k+1}}
                + \Biginner{Lx^{\star}}{y^{k+1} - y^{\star}}
            \left.\vphantom{\Biginner{Lx^{\star}}{y^{k+1} - y^{\star}}}\right) \\
            &+ g^*\Bigp{y^{k+1}} - g^*\Bigp{y^{\star}}
            + \Biginner{-\theta Lx^{k} + \frac{1}{\sigma}y^{k} + (1+\theta)Lx^{k+1} - \frac{1}{\sigma}y^{k+1}}{y^{\star} - y^{k+1}}.
        \end{aligned} \\
        ={}&
        \begin{aligned}[t]
            &\frac{1}{2}\Bignorm{\Bigp{x^{k} - x^{\star}, y^{k} - y^{\star}}}_{P}^{2}
            - \frac{1}{4}\Bignorm{\Bigp{x^{k+1} - x^{k}, y^{k+1} - y^{k}}}_{P}^{2} \\
            &- \frac{1-\theta}{2}
            \left(\vphantom{\Biginner{y^{k} - y^{\star}}{L\Bigp{x^{k+1} - x^{k}}}}\right.
                \Biginner{y^{k} - y^{\star}}{L\Bigp{x^{k+1} - x^{k}}}
                - \Biginner{L\Bigp{x^{k} - x^{\star}}}{y^{k+1} - y^{k}}
            \left.\vphantom{\Biginner{y^{k} - y^{\star}}{L\Bigp{x^{k+1} - x^{k}}}}\right) \\
            &- \Biginner{y^{\star}}{Lx^{k+1} - Lx^{\star}}
            + \Biginner{\frac{1}{\tau}x^{k} - L^{*}y^{k} - \frac{1}{\tau}x^{k+1}}{x^{\star} - x^{k+1}} \\
            &+ \Biginner{Lx^{\star}}{y^{k+1} - y^{\star}}
            + \Biginner{-\theta Lx^{k} + \frac{1}{\sigma}y^{k} + (1+\theta)Lx^{k+1} - \frac{1}{\sigma}y^{k+1}}{y^{\star} - y^{k+1}}.
        \end{aligned} \\
        ={}&
        \begin{aligned}[t]
            &\frac{1}{2\tau}\Bignorm{x^{k} - x^{\star}}^{2}
            - \frac{1}{4\tau}\Bignorm{x^{k+1} - x^{k}}^{2}
            + \frac{1}{\tau}\Biginner{x^{k+1} - x^{k}}{x^{k+1} - x^{\star}} \\
            &+ \frac{1}{2\sigma}\Bignorm{y^{k} - y^{\star}}^{2}
            - \frac{1}{4\sigma}\Bignorm{y^{k+1} - y^{k}}^{2}
            + \frac{1}{\sigma}\Biginner{y^{k+1} - y^{k}}{y^{k+1} - y^{\star}} \\
            &- \frac{1+\theta}{2}\Biginner{L\Bigp{x^{k} - x^{\star}}}{y^{k} - y^{\star}}
            + \frac{1+\theta}{4}\Biginner{L\Bigp{x^{k+1} - x^{k}}}{y^{k+1} - y^{k}} \\
            &- \frac{1-\theta}{2}\Biginner{y^{k} - y^{\star}}{L\Bigp{x^{k+1} - x^{k}}}
            + \frac{1-\theta}{2}\Biginner{L\Bigp{x^{k} - x^{\star}}}{y^{k+1} - y^{k}} \\
            &+ \Biginner{y^{k} - y^{\star}}{L\Bigp{x^{k+1} - x^{\star}}}
            + \Biginner{\theta Lx^{k} - (1+\theta)Lx^{k+1} + Lx^{\star}}{y^{k+1} - y^{\star}}.
        \end{aligned} \\
        ={}&
        \begin{aligned}[t]
            &\frac{1}{2\tau}\Bignorm{x^{k} - x^{\star}}^{2}
            - \frac{1}{4\tau}\Bignorm{x^{k+1} - x^{k}}^{2}
            + \frac{1}{\tau}\Biginner{x^{k+1} - x^{k}}{x^{k+1} - x^{\star}} \\
            &+ \frac{1}{2\sigma}\Bignorm{y^{k} - y^{\star}}^{2}
            - \frac{1}{4\sigma}\Bignorm{y^{k+1} - y^{k}}^{2}
            + \frac{1}{\sigma}\Biginner{y^{k+1} - y^{k}}{y^{k+1} - y^{\star}} \\
            &- \frac{1+\theta}{2}\Biginner{L\Bigp{x^{k} - x^{\star}}}{y^{k} - y^{\star}}
            - \frac{1+\theta}{2}\Biginner{L\Bigp{x^{k+1} - x^{k}}}{y^{k} - y^{\star}} \\
            &- \frac{1+\theta}{2}\Biginner{L\Bigp{x^{k} - x^{\star}}}{y^{k+1} - y^{k}}
            - \frac{3(1+\theta)}{4}\Biginner{L\Bigp{x^{k+1} - x^{k}}}{y^{k+1} - y^{k}}.
        \end{aligned} \\
        ={}&
        \begin{aligned}[t]
            &\frac{1}{4\tau}\Bignorm{x^{k+1} - x^{k}}^{2}
            + \frac{1}{2\tau}\Bignorm{x^{k+1} - x^{\star}}^{2} \\
            &+ \frac{1}{4\sigma}\Bignorm{y^{k+1} - y^{k}}^{2}
            + \frac{1}{2\sigma}\Bignorm{y^{k+1} - y^{\star}}^{2} \\
            &- \frac{1+\theta}{4}\Biginner{L\Bigp{x^{k+1} - x^{k}}}{y^{k+1} - y^{k}} \\
            &- \frac{1+\theta}{2}\Biginner{L\Bigp{x^{k+1} - x^{\star}}}{y^{k+1} - y^{\star}}.
        \end{aligned} \\
        ={}&
        \frac{1}{4}\Bignorm{\Bigp{x^{k+1} - x^{k}, y^{k+1} - y^{k}}}_{P}^{2}
        + \frac{1}{2}\Bignorm{\Bigp{x^{k+1} - x^{\star}, y^{k+1} - y^{\star}}}_{P}^{2}.
    \end{align*}
    \endgroup
    The first inequality uses only that \(0<\theta \leq 1\), so each underbraced term can be added without increasing the right-hand side. In the identities that follow, we first substitute the definitions of \(\mathcal{V}(k)\) and \(\mathcal{D}_{x^{\star},y^{\star}}\), then use \(\Biginner{L^{*}y}{x} = \Biginner{y}{Lx}\) together with \(\frac{1-\theta}{2} + \frac{1+\theta}{2} = 1\) to cancel the function-value terms. Finally, the quadratic terms in \(x\) and \(y\) are completed, and the remaining \(L\)-coupling terms are collected back into the two \(P\)-quadratic forms.
    Since \eqref{eq:lyapunov:parameter_condition} implies, by \Cref{lem:P_properties:positive}, that \(\Bignorm{\cdot}_{P}\) is a seminorm on \(\calH\times\calG\), both terms on the right-hand side are nonnegative.
\end{proof}

\begin{proposition}[Lyapunov inequality]
    \label{prop:lyapunov_inequality}
    Suppose that \Cref{ass:main} holds.
    Let \(\Bigp{\Bigp{x^{k},y^{k}}}_{k\in\naturals}\) be generated by \eqref{eq:chambolle_pock_iteration} with \eqref{eq:lyapunov:parameter_condition}, \(\Bigp{x^{\star},y^{\star}}\in\calH\times\calG\) satisfy \eqref{eq:KKT_conditions}, \(\mathcal{D}_{x^{\star},y^{\star}}\) be the duality gap function from \eqref{eq:pd_gap_fcn}, and \(\mathcal{V}\) be given by \Cref{def:lyapunov_function}.
    Define the bounded linear operator \(K:\calH\to\calG\) by
    \begin{equation*}
        K
        =
        \begin{cases}
            L / \Bignorm{L}, & \text{if } L \neq 0, \\
            0, & \text{if } L = 0,
        \end{cases}
    \end{equation*}
    and define
    \begin{equation}\label{eq:eta_coefficients_lyapunov_inequality}
    \begin{aligned}
        \eta_{\pm}
        :={}&
        \frac{
            4\theta(2-\theta) - \tau\sigma\Bignorm{L}^{2}\Bigp{1 - 2\theta + 9\theta^{2} - 4\theta^{3}}
        }{
            8\Bigp{1 \pm \sqrt{\tau\sigma}\Bignorm{L}\theta(1-\theta)}
        }.
    \end{aligned}
    \end{equation}
    Then the denominators in \eqref{eq:eta_coefficients_lyapunov_inequality} are positive and
    \begin{align}\label{eq:eta_coefficients_lyapunov_inequality:pos}
        \eta_{\pm}\ge 0.
    \end{align}
    Moreover,
    \begingroup
    \allowdisplaybreaks
    \begin{equation}\label{eq:lyapunov_inequality}
        \Bigp{\forall k\in\naturals}\quad
        \mathcal{V}(k+1) {}\leq{}
        \begin{aligned}[t]
            &\mathcal{V}(k)
            -
            \mathcal{D}_{x^{\star},y^{\star}}\Bigp{x^{k+1},y^{k+1}} \\
            &-
            \frac{\theta}{4\tau}
            \Bigp{
                \Bignorm{x^{k+2} - x^{k+1}}^{2}
                - \Bignorm{K\Bigp{x^{k+2} - x^{k+1}}}^{2}
            } \\
            &-
            \frac{\eta_{+}}{4}
            \Bignorm{
                \frac{1}{\sqrt{\tau}}K\Bigp{x^{k+2} - x^{k+1}}
                + \frac{1}{\sqrt{\sigma}}\Bigp{y^{k+1} - y^{k}}
            }^{2} \\
            &-
            \frac{\eta_{-}}{4}
            \Bignorm{
                \frac{1}{\sqrt{\tau}}K\Bigp{x^{k+2} - x^{k+1}}
                - \frac{1}{\sqrt{\sigma}}\Bigp{y^{k+1} - y^{k}}
            }^{2}.
        \end{aligned}
    \end{equation}
    \endgroup
    Moreover, if the last inequality in \eqref{eq:lyapunov:parameter_condition} is strict, then the inequality in \eqref{eq:eta_coefficients_lyapunov_inequality:pos} is strict.
\end{proposition}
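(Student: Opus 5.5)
The proof splits into two parts: the elementary claims about the coefficients \(\eta_{\pm}\), and the Lyapunov inequality \eqref{eq:lyapunov_inequality}.

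\emph{Coefficients.} Set \(s=\sqrt{\tau\sigma}\Bignorm{L}\) for brevity. By \eqref{eq:inequality_lemma:2} in \Cref{lem:parameters}, \(s(1+\theta)\le 2\). Since \(\theta(1-\theta)\le 1/4\) on \((0,1]\), we get \(s\theta(1-\theta)\le \tfrac{2}{1+\theta}\cdot\tfrac14<1\). Hence both denominators \(8(1\pm s\theta(1-\theta))\) are positive. The numerator is nonnegative by \eqref{eq:inequality_lemma:1}, and strictly positive under the strict condition, which gives \eqref{eq:eta_coefficients_lyapunov_inequality:pos} and its strict version.

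\emph{Inequality.} I would mirror the proof of \Cref{prop:lyapunov_lower_bound} and write the claim as
\[
\mathcal{V}(k)-\mathcal{V}(k+1)-\mathcal{D}_{x^\star,y^\star}(x^{k+1},y^{k+1})-(\text{quadratic terms}) \ge 0 .
\]
The subgradient inequalities available are those obtained from \eqref{eq:prox_characterization} at iterations \(k\) and \(k+1\), i.e.\ the inclusions for \(\partial f(x^{k+1})\), \(\partial f(x^{k+2})\), \(\partial g^*(y^{k+1})\), \(\partial g^*(y^{k+2})\), together with the KKT inclusions \eqref{eq:KKT_conditions}. Each can be tested against \(x^\star\), \(x^{k+1}\), \(x^{k+2}\) (respectively \(y^\star\), \(y^{k+1}\), \(y^{k+2}\)). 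I would pick nonnegative multipliers for these inequalities so that all function values cancel, except for the \(-\mathcal{D}(x^{k+1},y^{k+1})\) term and the \(\tfrac{1-\theta}{2}\mathcal{D}\) terms appearing in \(\mathcal{V}(k)\) and \(\mathcal{V}(k+1)\).

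Natural choices for the multipliers are:
\begin{itemize}
\item weight \(1\) on the interpolation inequalities between consecutive iterates, i.e.\ \(x^{k+2}\) tested at \(x^{k+1}\) and \(x^{k+1}\) tested at \(x^{k+2}\) (and similarly for \(y\)), weighted by \(\theta\) or \(1-\theta\);
\item weights \(\tfrac{1+\theta}{2}\) and \(1\) on the tests against the KKT point, exactly as in \Cref{prop:lyapunov_lower_bound}.
\end{itemize}
The weights are fixed by matching the coefficients of \(f(x^{k+1})\), \(f(x^{k+2})\), \(g^*(y^{k+1})\) and \(g^*(y^{k+2})\). Once the function values are eliminated, what remains is a quadratic form in the increments. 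Because the iteration is linear apart from the prox steps, I would express everything in terms of \(u=x^{k+2}-x^{k+1}\), \(v=y^{k+1}-y^{k}\), and possibly \(w=y^{k+2}-y^{k+1}\) and \(x^{k+1}-x^{k}\). The terms involving the distance to \((x^\star,y^\star)\) should cancel through the same telescoping identity used in \Cref{prop:lyapunov_lower_bound}.

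\emph{Main obstacle.} The hard step is showing that this residual quadratic form dominates the stated sum of squares. I would rescale with \(K=L/\Bignorm{L}\), \(a=u/\sqrt{\tau}\) and \(b=v/\sqrt{\sigma}\), so that the form depends on \(\theta\), \(s\), and inner products \(\inner{Ka}{b}\). Then I would decompose
\[
\Bignorm{a}^2=\bigl(\Bignorm{a}^2-\Bignorm{Ka}^2\bigr)+\Bignorm{Ka}^2 ,
\]
where the first summand is nonnegative because \(\Bignorm{K}\le 1\). This explains the term \(\tfrac{\theta}{4\tau}\bigl(\Bignorm{u}^2-\Bignorm{Ku}^2\bigr)\).

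The remainder is a \(2\times2\) form in \((Ka,b)\) with entries polynomial in \(\theta\) and \(s\). Diagonalizing it along \(Ka\pm b\) should produce eigen-coefficients of the form \(\eta_\pm\) with denominators \(1\pm s\theta(1-\theta)\); these presumably arise from a Schur complement when an extra variable, such as \(w\), is eliminated by completing a square. Verifying this is a finite polynomial identity, which I would check symbolically.

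If extra variables do not eliminate cleanly, I would complete squares in them first, using positivity of \(P\) from \Cref{lem:P_properties:positive}. The determinant condition then reduces to \eqref{eq:inequality_lemma:1}.
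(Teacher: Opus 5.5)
Your treatment of \(\eta_\pm\) is correct. Your overall plan also matches the paper's: weighted subgradient inequalities, elimination of function values, then a quadratic form in the increments handled with \(K\), \(\pm\) combinations and completed squares. The inequality part, however, has two genuine gaps.

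\emph{The multipliers.} Matching the coefficients of \(f(x^{k+1}),f(x^{k+2}),g^*(y^{k+1}),g^*(y^{k+2})\) does not fix the weights. You also need every term involving \((x^\star,y^\star)\) to cancel, and your suggested choices violate this.

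Write \(e=(x^{k+1}-x^\star,y^{k+1}-y^\star)\). The expression \(\mathcal V(k)-\mathcal V(k+1)-\mathcal D_{x^\star,y^\star}(x^{k+1},y^{k+1})\) contains no term quadratic in \(e\), since the two \(\tfrac12\|\cdot\|_P^2\) terms differ by something linear in \(e\). Because the coefficient of \(f(x^\star)\) is fixed, any positive weight on a KKT inequality (as in the lower-bound proof) must be compensated by extra weight on an iterate inequality tested at \(x^\star\). In effect you add a monotonicity term such as \(\langle s^{k+1}+L^*y^\star,\,x^{k+1}-x^\star\rangle\), with \(s^{k+1}=\tfrac1\tau(x^k-x^{k+1})-L^*y^k\), plus its \(g^*\)-analogue. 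These contribute terms like \(-\tfrac1\tau\langle x^{k+1}-x^k,\,x^{k+1}-x^\star\rangle\), which are linear in \(e\) with nothing to dominate them, so the residual is unbounded below.

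Likewise, weight \(1\) on both consecutive-iterate inequalities contradicts the function-value balance, which requires net weight \(\tfrac{1-\theta}{2}\) from \(x^{k+1}\) to \(x^{k+2}\). The paper uses only the inclusion \(s^{k+1}\in\partial f(x^{k+1})\), tested at \(x^{k+2}\) with weight \(\tfrac{1-\theta}{2}\) and at \(x^\star\) with weight \(1\), and the same two for \(g^*\) at \(y^{k+1}\). Neither KKT inclusion nor the inclusions at \((x^{k+2},y^{k+2})\) are used, and then all \((x^\star,y^\star)\)-dependence cancels exactly.

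\emph{The reduction from \(\calH\) to \(\calG\).} With these weights, set \(\hat a=(x^{k+1}-x^k)/\sqrt\tau\) and \(\hat b=(x^{k+2}-x^{k+1})/\sqrt\tau\). They enter through \(\|\hat a\|^2+\|\hat b\|^2-2(1-\theta)\langle\hat a,\hat b\rangle\) and otherwise only through \(L\). Splitting \(\|\hat b\|^2\) alone, as you propose, does not produce the factor \(\theta\). It also leaves \(\|\hat a\|^2\) and the cross term, both of which live in \(\calH\), untreated. The missing idea is the identity
\[
\|\hat a\|^2+\|\hat b\|^2-2(1-\theta)\langle\hat a,\hat b\rangle
=\theta\|\hat a\|^2+(1-\theta)\|\hat a-\hat b\|^2+\theta\|\hat b\|^2 .
\]
Subtract the same identity for \(K\hat a,K\hat b\), apply \(\|K\|\le1\) to each of the three differences, and keep only \(\theta(\|\hat b\|^2-\|K\hat b\|^2)\).

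What remains is a form in four \(\calG\)-valued variables, not two. Set \(p_\pm=K\hat a\pm(y^{k+2}-y^{k+1})/\sqrt\sigma\) and \(q_\pm=K\hat b\pm(y^{k+1}-y^k)/\sqrt\sigma\). The form then splits into two decoupled blocks \(\alpha_\pm\|p_\pm\|^2-2\beta_\pm\langle p_\pm,q_\pm\rangle+\gamma_\pm\|q_\pm\|^2\), where \(\alpha_\pm=\tfrac12(1\pm s\theta(1-\theta))\), \(\beta_\pm=\tfrac14(2(1-\theta)\pm(1+\theta)s)\) and \(\gamma_\pm=\tfrac12(1\pm s(1-\theta))\). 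Completing the square in \(p_\pm\) gives \(\eta_\pm=\gamma_\pm-\beta_\pm^2/\alpha_\pm\).

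Your Schur-complement guess is therefore right in spirit. However, the eliminated variable is \(p_\pm\), which mixes \(K(x^{k+1}-x^k)\) with \(y^{k+2}-y^{k+1}\), not \(w\) alone. This step needs only \(\alpha_\pm>0\), not positivity of \(P\).
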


\begin{proof}
    By the characterization of the proximal operator in \eqref{eq:prox_characterization}, \eqref{eq:chambolle_pock_iteration} is equivalent to
    \begin{align*}
        \frac{1}{\tau}x^{k} - L^{*}y^{k} - \frac{1}{\tau}x^{k+1} &\in \partial f\Bigp{x^{k+1}}, \\
        - \theta Lx^{k} + \frac{1}{\sigma}y^{k} + (1+\theta)Lx^{k+1} - \frac{1}{\sigma}y^{k+1} &\in \partial g^*\Bigp{y^{k+1}}.
    \end{align*}
    Hence, by the definition of the subdifferential in \eqref{eq:subdifferential_definition},
    \begingroup
    \allowdisplaybreaks
    \begin{align}
        &\mathcal{V}(k+1) - \mathcal{V}(k)
        + \mathcal{D}_{x^{\star},y^{\star}}\Bigp{x^{k+1},y^{k+1}} \notag\\
        {}&{}\leq
        \begin{aligned}[t]
            &\mathcal{V}(k+1) - \mathcal{V}(k)
            + \mathcal{D}_{x^{\star},y^{\star}}\Bigp{x^{k+1},y^{k+1}} \\
            &+ \frac{1-\theta}{2}
            \underbracket{
                \Bigp{
                    f\Bigp{x^{k+2}} - f\Bigp{x^{k+1}}
                    - \Biginner{\frac{1}{\tau}x^{k} - L^{*}y^{k} - \frac{1}{\tau}x^{k+1}}{x^{k+2} - x^{k+1}}
                }
            }_{\ge 0} \\
            &+ \underbracket{
                \Bigp{
                    f\Bigp{x^{\star}} - f\Bigp{x^{k+1}}
                    - \Biginner{\frac{1}{\tau}x^{k} - L^{*}y^{k} - \frac{1}{\tau}x^{k+1}}{x^{\star} - x^{k+1}}
                }
            }_{\ge 0} \\
            &+ \frac{1-\theta}{2}
            \underbracket{
                \Bigp{
                    g^*\Bigp{y^{k+2}} - g^*\Bigp{y^{k+1}}
                    - \Biginner{-\theta Lx^{k} + \frac{1}{\sigma}y^{k} + (1+\theta)Lx^{k+1} - \frac{1}{\sigma}y^{k+1}}{y^{k+2} - y^{k+1}}
                }
            }_{\ge 0} \\
            &+ \underbracket{
                \Bigp{
                    g^*\Bigp{y^{\star}} - g^*\Bigp{y^{k+1}}
                    - \Biginner{-\theta Lx^{k} + \frac{1}{\sigma}y^{k} + (1+\theta)Lx^{k+1} - \frac{1}{\sigma}y^{k+1}}{y^{\star} - y^{k+1}}
                }
            }_{\ge 0}
        \end{aligned} \notag\\
        {}&{}=
        -\frac{1}{4}\left(\vphantom{\frac{1}{\tau}\Bignorm{x^{k+2} - x^{k+1}}^{2}}\right.
            \frac{1}{\tau}\Bignorm{x^{k+1} - x^{k}}^{2}
            + \frac{1}{\tau}\Bignorm{x^{k+2} - x^{k+1}}^{2}
            + \frac{1}{\sigma}\Bignorm{y^{k+1} - y^{k}}^{2}
            + \frac{1}{\sigma}\Bignorm{y^{k+2} - y^{k+1}}^{2} \notag\\
            &\quad
            - \frac{2(1-\theta)}{\tau}\Biginner{x^{k+1} - x^{k}}{x^{k+2} - x^{k+1}}
            - \frac{2(1-\theta)}{\sigma}\Biginner{y^{k+1} - y^{k}}{y^{k+2} - y^{k+1}} \notag\\
            &\quad
            - (1+\theta)\Biginner{L\Bigp{x^{k+1} - x^{k}}}{y^{k+1} - y^{k}}
            + 2(1-\theta)\Biginner{L\Bigp{x^{k+2} - x^{k+1}}}{y^{k+1} - y^{k}} \notag\\
            &\quad
            - (1+\theta)\Biginner{L\Bigp{x^{k+2} - x^{k+1}}}{y^{k+2} - y^{k+1}} \notag\\
            &\quad
            + 2\theta(1-\theta)\Biginner{L\Bigp{x^{k+1} - x^{k}}}{y^{k+2} - y^{k+1}}
        \left.\vphantom{\frac{1}{\tau}\Bignorm{x^{k+2} - x^{k+1}}^{2}}\right).\label{eq:c1_unnormalized_lyapunov_inequality}
    \end{align}
    \endgroup
    Here, the first inequality uses only that \(0<\theta \leq 1\), so each underbraced term can be added without decreasing the right-hand side. The last equality is obtained by expanding \(\mathcal{V}(k+1) - \mathcal{V}(k)\) from \eqref{eq:lyapunov_function}, expanding the duality gap from \eqref{eq:pd_gap_fcn}, using \(\Biginner{L^{*}y}{x} = \Biginner{y}{Lx}\), and then collecting the increments \(x^{k+1} - x^{k}\), \(x^{k+2} - x^{k+1}\), \(y^{k+1} - y^{k}\), and \(y^{k+2} - y^{k+1}\).

    Set
    \begin{align*}
        \widehat{\delta}_x^{k} = \frac{x^{k+1} - x^{k}}{\sqrt{\tau}},
        \qquad
        \widehat{\delta}_x^{k+1} = \frac{x^{k+2} - x^{k+1}}{\sqrt{\tau}},
        \qquad
        \delta_y^{k} = \frac{y^{k+1} - y^{k}}{\sqrt{\sigma}},
        \qquad
        \delta_y^{k+1} = \frac{y^{k+2} - y^{k+1}}{\sqrt{\sigma}},
    \end{align*}
    and
    \begin{align}\label{eq:delta_x}
        \delta_x^{k} = K\widehat{\delta}_x^{k},
        \qquad
        \delta_x^{k+1} = K\widehat{\delta}_x^{k+1}.
    \end{align}
    Thus, using \(L = \Bignorm{L}K\), \eqref{eq:c1_unnormalized_lyapunov_inequality} becomes
    \begin{align}
        {}&{}\mathcal{V}(k+1) - \mathcal{V}(k)
        + \mathcal{D}_{x^{\star},y^{\star}}\Bigp{x^{k+1},y^{k+1}} \notag\\
        {}&{}\le{}
        -\frac{1}{4}\left(\vphantom{\sqrt{\tau\sigma}\Bignorm{L}\Biginner{\delta_x^{k+1}}{\delta_y^{k+1}}}\right.
            \Bignorm{\widehat{\delta}_x^{k}}^{2}
            + \Bignorm{\widehat{\delta}_x^{k+1}}^{2}
            - 2(1-\theta)\Biginner{\widehat{\delta}_x^{k}}{\widehat{\delta}_x^{k+1}}
            + \Bignorm{\delta_y^{k}}^{2}
            + \Bignorm{\delta_y^{k+1}}^{2}
            - 2(1-\theta)\Biginner{\delta_y^{k}}{\delta_y^{k+1}} \notag\\
            {}&{}
            - (1+\theta)\sqrt{\tau\sigma}\Bignorm{L}\Biginner{\delta_x^{k}}{\delta_y^{k}}
            + 2(1-\theta)\sqrt{\tau\sigma}\Bignorm{L}\Biginner{\delta_x^{k+1}}{\delta_y^{k}} \notag\\
            {}&{}
            - (1+\theta)\sqrt{\tau\sigma}\Bignorm{L}\Biginner{\delta_x^{k+1}}{\delta_y^{k+1}}
            + 2\theta(1-\theta)\sqrt{\tau\sigma}\Bignorm{L}\Biginner{\delta_x^{k}}{\delta_y^{k+1}}
        \left.\vphantom{\sqrt{\tau\sigma}\Bignorm{L}\Biginner{\delta_x^{k+1}}{\delta_y^{k+1}}}\right).\label{eq:c1_normalized_lyapunov_inequality}
    \end{align}

    Next, define
    \begin{align*}
        \mathcal{E}_k
        ={}&
        \Bigp{
            \Bignorm{\widehat{\delta}_x^{k}}^{2}
            + \Bignorm{\widehat{\delta}_x^{k+1}}^{2}
            - 2(1-\theta)\Biginner{\widehat{\delta}_x^{k}}{\widehat{\delta}_x^{k+1}}
        }
        -
        \Bigp{
            \Bignorm{\delta_x^{k}}^{2}
            + \Bignorm{\delta_x^{k+1}}^{2}
            - 2(1-\theta)\Biginner{\delta_x^{k}}{\delta_x^{k+1}}
        }.
    \end{align*}

    Then \eqref{eq:c1_normalized_lyapunov_inequality} gives
    \begin{align}
        &\mathcal{V}(k+1) - \mathcal{V}(k) + \mathcal{D}_{x^{\star},y^{\star}}\Bigp{x^{k+1},y^{k+1}} \notag \\
        {}&{}\le
        -\frac{1}{4}\mathcal{E}_k
        -\frac{1}{4}\left(\vphantom{\sqrt{\tau\sigma}\Bignorm{L}\Biginner{\delta_x^{k+1}}{\delta_y^{k+1}}}\right.
            \Bignorm{\delta_x^{k}}^{2}
            + \Bignorm{\delta_x^{k+1}}^{2}
            + \Bignorm{\delta_y^{k}}^{2}
            + \Bignorm{\delta_y^{k+1}}^{2} \notag\\
            {}&{}\quad
            - 2(1-\theta)\Biginner{\delta_x^{k}}{\delta_x^{k+1}}
            - 2(1-\theta)\Biginner{\delta_y^{k}}{\delta_y^{k+1}} \notag\\
            {}&{}\quad
            - (1+\theta)\sqrt{\tau\sigma}\Bignorm{L}\Biginner{\delta_x^{k}}{\delta_y^{k}}
            + 2(1-\theta)\sqrt{\tau\sigma}\Bignorm{L}\Biginner{\delta_x^{k+1}}{\delta_y^{k}} \notag\\
            {}&{}\quad
            - (1+\theta)\sqrt{\tau\sigma}\Bignorm{L}\Biginner{\delta_x^{k+1}}{\delta_y^{k+1}}
            + 2\theta(1-\theta)\sqrt{\tau\sigma}\Bignorm{L}\Biginner{\delta_x^{k}}{\delta_y^{k+1}}
        \left.\vphantom{\sqrt{\tau\sigma}\Bignorm{L}\Biginner{\delta_x^{k+1}}{\delta_y^{k+1}}}\right).\label{eq:c1_reduced_lyapunov_inequality}
    \end{align}

    Next, we rewrite the remaining quadratic form in terms of the sum and difference variables.
    Using
    \begingroup
    \allowdisplaybreaks
    \begin{align*}
        \Bignorm{\delta_x^{k}}^{2} + \Bignorm{\delta_y^{k+1}}^{2}
        &={}
        \frac{1}{2}\Bignorm{\delta_x^{k} + \delta_y^{k+1}}^{2}
        + \frac{1}{2}\Bignorm{\delta_x^{k} - \delta_y^{k+1}}^{2}, \\
        2\Biginner{\delta_x^{k}}{\delta_y^{k+1}}
        &={}
        \frac{1}{2}\Bignorm{\delta_x^{k} + \delta_y^{k+1}}^{2}
        - \frac{1}{2}\Bignorm{\delta_x^{k} - \delta_y^{k+1}}^{2}, \\
        \Bignorm{\delta_x^{k+1}}^{2} + \Bignorm{\delta_y^{k}}^{2}
        &={}
        \frac{1}{2}\Bignorm{\delta_x^{k+1} + \delta_y^{k}}^{2}
        + \frac{1}{2}\Bignorm{\delta_x^{k+1} - \delta_y^{k}}^{2}, \\
        2\Biginner{\delta_x^{k+1}}{\delta_y^{k}}
        &={}
        \frac{1}{2}\Bignorm{\delta_x^{k+1} + \delta_y^{k}}^{2}
        - \frac{1}{2}\Bignorm{\delta_x^{k+1} - \delta_y^{k}}^{2}, \\
        2\Biginner{\delta_x^{k}}{\delta_x^{k+1}}
        + 2\Biginner{\delta_y^{k+1}}{\delta_y^{k}}
        &={}
        \Biginner{\delta_x^{k} + \delta_y^{k+1}}{\delta_x^{k+1} + \delta_y^{k}}
        + \Biginner{\delta_x^{k} - \delta_y^{k+1}}{\delta_x^{k+1} - \delta_y^{k}}, \\
        2\Biginner{\delta_x^{k}}{\delta_y^{k}}
        + 2\Biginner{\delta_y^{k+1}}{\delta_x^{k+1}}
        &={}
        \Biginner{\delta_x^{k} + \delta_y^{k+1}}{\delta_x^{k+1} + \delta_y^{k}}
        - \Biginner{\delta_x^{k} - \delta_y^{k+1}}{\delta_x^{k+1} - \delta_y^{k}},
    \end{align*}
    \endgroup
    in \eqref{eq:c1_reduced_lyapunov_inequality} gives
    \begin{align}
        {}&{}\mathcal{V}(k+1) - \mathcal{V}(k)
        + \mathcal{D}_{x^{\star},y^{\star}}\Bigp{x^{k+1},y^{k+1}} \notag\\
        {}&{}\le
        -\frac{1}{4}\mathcal{E}_k
        -\frac{1}{4}\left(\vphantom{\gamma_{+}\Bignorm{\delta_x^{k+1} + \delta_y^{k}}^{2}}\right.
            \alpha_{+}\Bignorm{\delta_x^{k} + \delta_y^{k+1}}^{2}
            - 2\beta_{+}\Biginner{\delta_x^{k} + \delta_y^{k+1}}{\delta_x^{k+1} + \delta_y^{k}}
            + \gamma_{+}\Bignorm{\delta_x^{k+1} + \delta_y^{k}}^{2} \notag\\
            &\quad
            + \alpha_{-}\Bignorm{\delta_x^{k} - \delta_y^{k+1}}^{2}
            - 2\beta_{-}\Biginner{\delta_x^{k} - \delta_y^{k+1}}{\delta_x^{k+1} - \delta_y^{k}}
            + \gamma_{-}\Bignorm{\delta_x^{k+1} - \delta_y^{k}}^{2}
        \left.\vphantom{\gamma_{+}\Bignorm{\delta_x^{k+1} + \delta_y^{k}}^{2}}\right),\label{eq:c1_compact_lyapunov_inequality}
    \end{align}
    where
    \begin{align*}
        \alpha_{\pm} &= \frac{1 \pm \sqrt{\tau\sigma}\Bignorm{L}\theta(1-\theta)}{2}, \\
        \beta_{\pm} &= \frac{2(1-\theta) \pm (1+\theta)\sqrt{\tau\sigma}\Bignorm{L}}{4}, \\
        \gamma_{\pm} &= \frac{1 \pm \sqrt{\tau\sigma}\Bignorm{L}(1-\theta)}{2}.
    \end{align*}

    Completing the square in the \(+\) and \(-\) blocks gives
    \begin{align}
        {}&{}\mathcal{V}(k+1) - \mathcal{V}(k)
        + \mathcal{D}_{x^{\star},y^{\star}}\Bigp{x^{k+1},y^{k+1}} \notag\\
        {}&{}\le
        -\frac{1}{4}\mathcal{E}_k -\frac{1}{4}\left(\vphantom{\Bigp{\gamma_{+} - \frac{\beta_{+}^{2}}{\alpha_{+}}}\Bignorm{\delta_x^{k+1} + \delta_y^{k}}^{2}}\right.
            \alpha_{+}
            \Bignorm{
                \delta_x^{k} + \delta_y^{k+1}
                - \frac{\beta_{+}}{\alpha_{+}}\Bigp{\delta_x^{k+1} + \delta_y^{k}}
            }^{2}
            + \Bigp{\gamma_{+} - \frac{\beta_{+}^{2}}{\alpha_{+}}}
            \Bignorm{\delta_x^{k+1} + \delta_y^{k}}^{2} \notag\\
            &\quad
            + \alpha_{-}
            \Bignorm{
                \delta_x^{k} - \delta_y^{k+1}
                - \frac{\beta_{-}}{\alpha_{-}}\Bigp{\delta_x^{k+1} - \delta_y^{k}}
            }^{2}
            + \Bigp{\gamma_{-} - \frac{\beta_{-}^{2}}{\alpha_{-}}}
            \Bignorm{\delta_x^{k+1} - \delta_y^{k}}^{2}
        \left.\vphantom{\Bigp{\gamma_{+} - \frac{\beta_{+}^{2}}{\alpha_{+}}}\Bignorm{\delta_x^{k+1} + \delta_y^{k}}^{2}}\right).\label{eq:c1_completed_square_lyapunov_inequality}
    \end{align}

    Note that \eqref{eq:inequality_lemma:2} of \Cref{lem:parameters} gives \(\sqrt{\tau\sigma}\Bignorm{L}(1+\theta)\le 2\).
    Thus,
    \begin{align*}
        \alpha_{+} > 0,
        \qquad
        \alpha_{-}
        \ge
        \frac{1 - 2\theta(1-\theta)}{2}
        > 0,
    \end{align*}
    since \(0 < \theta \le 1\).
    Moreover, note that \eqref{eq:eta_coefficients_lyapunov_inequality} is exactly the identity
    \begin{align*}
        \eta_{\pm} = \gamma_{\pm} - \frac{\beta_{\pm}^{2}}{\alpha_{\pm}}.
    \end{align*}
    Therefore, \eqref{eq:c1_completed_square_lyapunov_inequality} gives
    \begin{align}
        \mathcal{V}(k+1)
        \le
        \mathcal{V}(k)
        - \mathcal{D}_{x^{\star},y^{\star}}\Bigp{x^{k+1},y^{k+1}}
        - \frac{1}{4}\mathcal{E}_k
        - \frac{\eta_{+}}{4}\Bignorm{\delta_x^{k+1} + \delta_y^{k}}^{2}
        - \frac{\eta_{-}}{4}\Bignorm{\delta_x^{k+1} - \delta_y^{k}}^{2}.\label{eq:descent_with_eta_lyapunov_inequality}
    \end{align}

    Since \(\Bignorm{K}\le 1\), \eqref{eq:delta_x} gives
    \begingroup
    \allowdisplaybreaks
    \begin{align}
        \mathcal{E}_k
        ={}&{}
        \theta\Bigp{
            \Bignorm{\widehat{\delta}_x^{k}}^{2} - \Bignorm{\delta_x^{k}}^{2}
        }
        +
        (1-\theta)\Bigp{
            \Bignorm{\widehat{\delta}_x^{k} - \widehat{\delta}_x^{k+1}}^{2}
            - \Bignorm{\delta_x^{k} - \delta_x^{k+1}}^{2}
        }
        \notag\\
        {}&{}
        +
        \theta\Bigp{
            \Bignorm{\widehat{\delta}_x^{k+1}}^{2} - \Bignorm{\delta_x^{k+1}}^{2}
        }
        \notag\\
        \geq{}&{}
        \theta\Bigp{
            \Bignorm{\widehat{\delta}_x^{k}}^{2} - \Bignorm{K}^2\Bignorm{\widehat{\delta}_x^{k}}^{2}
        }
        +
        (1-\theta)\Bigp{
            \Bignorm{\widehat{\delta}_x^{k} - \widehat{\delta}_x^{k+1}}^{2}
            - \Bignorm{K}^2\Bignorm{\widehat{\delta}_x^{k} - \widehat{\delta}_x^{k+1}}^{2}
        }
        \notag\\
        {}&{}
        +
        \theta\Bigp{
            \Bignorm{\widehat{\delta}_x^{k+1}}^{2} - \Bignorm{\delta_x^{k+1}}^{2}
        }
        \notag\\
        \geq{}&{}
        \theta\Bigp{
            \Bignorm{\widehat{\delta}_x^{k+1}}^{2}
            - \Bignorm{\delta_x^{k+1}}^{2}
        }.\label{eq:E_k_lower_bound_lyapunov_inequality}
    \end{align}
    \endgroup
    Inserting the lower bound \eqref{eq:E_k_lower_bound_lyapunov_inequality} into \eqref{eq:descent_with_eta_lyapunov_inequality} gives \eqref{eq:lyapunov_inequality}.

    Under the parameter condition \eqref{eq:lyapunov:parameter_condition}, the numerator of \(\eta_{\pm}\) in \eqref{eq:eta_coefficients_lyapunov_inequality} is nonnegative by \eqref{eq:inequality_lemma:1} of \Cref{lem:parameters}, and the denominators are positive since
    \begin{align*}
        8\Bigp{1 \pm \sqrt{\tau\sigma}\Bignorm{L}\theta(1-\theta)} = 16\alpha_{\pm} > 0,
    \end{align*}
    so \(\eta_{\pm}\ge 0\).
    Finally, if the last inequality in \eqref{eq:lyapunov:parameter_condition} is strict, then the numerator of \(\eta_{\pm}\) in \eqref{eq:eta_coefficients_lyapunov_inequality} is positive by \eqref{eq:inequality_lemma:1} of \Cref{lem:parameters}, so \(\eta_{\pm}> 0\).
\end{proof}

\section[Proof of Theorem \ref{thm:main:sequence_convergence}]{Proof of \Cref{thm:main:sequence_convergence}}\label{sec:proof_sequence_convergence}

The parameter condition \eqref{eq:main:sequence_convergence:parameter_condition} implies \eqref{eq:lyapunov:parameter_condition} and makes its last inequality strict; the proof below uses only these facts. Hence, the strict conclusions of \Cref{lem:parameters,lem:P_properties,prop:lyapunov_inequality} apply.
Let
\begin{align*}
    Z
    =
    \Bigset{(x,y)\in\calH\times\calG \xmiddle| -L^{*}y\in\partial f(x)\ \text{and}\ Lx\in\partial g^{*}(y)}.
\end{align*}
By \Cref{ass:KKT}, the set \(Z\) is nonempty.
Fix an arbitrary point \(\p{x^{\star},y^{\star}}\in Z\), let \(\mathcal{D}_{x^{\star},y^{\star}}\) be the duality gap function from \eqref{eq:pd_gap_fcn}, and let \(\mathcal{V}\) be the Lyapunov function \eqref{eq:lyapunov_function} from \Cref{def:lyapunov_function}.
\Cref{prop:lyapunov_lower_bound,prop:lyapunov_inequality} imply that the sequence \(\Bigp{\mathcal{V}(k)}_{k\in\naturals}\) is bounded below and nonincreasing, respectively.
Therefore, \(\Bigp{\mathcal{V}(k)}_{k\in\naturals}\) converges by the monotone convergence theorem.
Moreover, by \Cref{prop:lyapunov_lower_bound},
\begin{align*}
    \Bigp{\forall k\in\naturals}\quad
    0 \leq \frac{1}{2}
    \Bignorm{\Bigp{x^{k+1} - x^{\star}, y^{k+1} - y^{\star}}}_{P}^{2}
    \le
    \mathcal{V}(0),
\end{align*}
so \(\Bigp{\Bigp{x^{k},y^{k}}}_{k\in\naturals}\) is bounded with respect to \(\Bignorm{\cdot}_{P}\). Since \Cref{lem:P_properties:strict} states that \(\Bignorm{\cdot}_{P}\) is equivalent to the canonical product norm on \(\calH\times\calG\), the sequence is also bounded in \(\calH\times\calG\).

Next, \Cref{prop:lyapunov_lower_bound,prop:lyapunov_inequality} imply that
\begin{align}
    &\sum_{k=0}^{\infty} \mathcal{D}_{x^{\star},y^{\star}}\Bigp{x^{k+1},y^{k+1}} < +\infty, \label{eq:summable_1}\\
    &\sum_{k=0}^{\infty} \Bigp{ \Bignorm{x^{k+2} - x^{k+1}}^{2} - \Bignorm{K\Bigp{x^{k+2} - x^{k+1}}}^{2}} < +\infty, \label{eq:summable_2}\\
    &\sum_{k=0}^{\infty} \Bignorm{\frac{1}{\sqrt{\tau}}K\Bigp{x^{k+2} - x^{k+1}} + \frac{1}{\sqrt{\sigma}}\Bigp{y^{k+1} - y^{k}}}^{2} < +\infty, \label{eq:summable_3}\\
    &\sum_{k=0}^{\infty} \Bignorm{\frac{1}{\sqrt{\tau}}K\Bigp{x^{k+2} - x^{k+1}} - \frac{1}{\sqrt{\sigma}}\Bigp{y^{k+1} - y^{k}}}^{2} < +\infty, \label{eq:summable_4}
\end{align}
via a telescoping summation argument. Therefore, \eqref{eq:summable_1} implies that
\begin{align}
    \mathcal{D}_{x^{\star},y^{\star}}\Bigp{x^{k+1},y^{k+1}} \to 0 \text{ as }k\to\infty, \label{eq:D_zero}
\end{align}
\eqref{eq:summable_3}, \eqref{eq:summable_4}, and \(L=\Bignorm{L}K\) imply that
\begin{align}
    &L\Bigp{x^{k+1} - x^{k}} \to 0 \text{ as }k\to\infty, \label{eq:Lx_diff_to_zero}\\
    &y^{k+1} - y^{k} \to 0 \text{ as }k\to\infty,\label{eq:y_diff_to_zero}
\end{align}
and \eqref{eq:summable_2}, together with \(\Bignorm{K}\leq 1\) and \eqref{eq:Lx_diff_to_zero}, imply that
\begin{align}
    x^{k+1} - x^{k} \to 0 \text{ as }k\to\infty.\label{eq:x_diff_to_zero}
\end{align}

Note that \eqref{eq:lyapunov_function} can be written as
\begingroup
\allowdisplaybreaks
\begin{align*}
    \Bignorm{\Bigp{x^{k} - x^{\star}, y^{k} - y^{\star}}}_{P}^{2}
    ={}&{}
    2\mathcal{V}(k) \\
    {}&{} +
    \frac{1}{2}\Bignorm{\Bigp{x^{k+1} - x^{k}, y^{k+1} - y^{k}}}_{P}^{2} \\
    {}&{} +
    \Bigp{1-\theta}\mathcal{D}_{x^{\star},y^{\star}}\p{x^{k+1},y^{k+1}} \\
    {}&{} +
    \Bigp{1-\theta}
    \Bigp{
        \Biginner{y^{k} - y^{\star}}{L\p{x^{k+1} - x^{k}}}
        - \Biginner{L\p{x^{k} - x^{\star}}}{y^{k+1} - y^{k}}
    }.
\end{align*}
\endgroup
Here the first term on the right-hand side converges because \(\Bigp{\mathcal{V}(k)}_{k\in\naturals}\) converges.
The second and third terms converge to zero by \eqref{eq:y_diff_to_zero}, \eqref{eq:x_diff_to_zero}, \Cref{lem:P_properties:strict}, and \eqref{eq:D_zero}.
The last term converges to zero because \(\Bigp{\Bigp{x^{k},y^{k}}}_{k\in\naturals}\) is bounded, while \eqref{eq:Lx_diff_to_zero} and \eqref{eq:y_diff_to_zero} show that the increments vanish.
In particular,
\begin{align}\label{eq:Opial_norm_converges}
    \Bigp{\Bignorm{\Bigp{x^{k} - x^{\star}, y^{k} - y^{\star}}}_{P}^{2} }_{k\in\naturals}
\end{align}
converges.

Consider the operator \(A: \calH\times\calG \to 2^{\calH\times\calG}\) given by
\begin{align*}
    A\Bigp{x,y} = \partial f(x)\times\partial g^{*}(y) + \Bigp{L^{*}y,-Lx}.
\end{align*}
Since \(f\) and \(g^{*}\) are proper, convex, and lower semicontinuous, \(\partial f\) and \(\partial g^{*}\) are maximally monotone by \cite[Theorem~20.25]{bauschke_combettes_2017_convex_analysis_monotone}.
It follows from \cite[Proposition~20.23]{bauschke_combettes_2017_convex_analysis_monotone} that
\begin{align*}
    (x,y) \mapsto \partial f(x)\times\partial g^{*}(y)
\end{align*}
is maximally monotone on \(\calH\times\calG\).
On the other hand,
\begin{align*}
    \Bigp{x,y} \mapsto \Bigp{L^{*}y,-Lx}
\end{align*}
is maximally monotone on \(\calH\times\calG\) by \cite[Example 20.35]{bauschke_combettes_2017_convex_analysis_monotone} and has full domain.
Consequently, \(A\) is maximally monotone by \cite[Corollary 25.5]{bauschke_combettes_2017_convex_analysis_monotone}.

The bounded sequence \(\Bigp{\Bigp{x^{k},y^{k}}}_{k\in\naturals}\) has a weakly convergent subsequence.

Let \(\Bigp{\Bigp{x^{k_{n}},y^{k_{n}}}}_{n\in\naturals}\) be such a subsequence and suppose that it converges weakly to \(\Bigp{\bar x,\bar y}\in\calH\times\calG\).
Since \eqref{eq:x_diff_to_zero} and \eqref{eq:y_diff_to_zero} hold, we also have
\begin{align*}
    \Bigp{x^{k_{n}+1},y^{k_{n}+1}} \rightharpoonup \Bigp{\bar x,\bar y}.
\end{align*}
By the characterization of the proximal operator in \eqref{eq:prox_characterization} and the update rule in \eqref{eq:chambolle_pock_iteration}, we obtain
\begin{align*}
    \partial f\p{x^{k_n+1}} + L^{*}y^{k_n+1} &\ni \frac{1}{\tau}\p{x^{k_n} - x^{k_n+1}} + L^{*}\p{y^{k_n+1} - y^{k_n}} \eqcolon u^{n} \xrightarrow[n\to \infty]{} 0, \\
    \partial g^{*}\p{y^{k_n+1}} - Lx^{k_n+1} &\ni \frac{1}{\sigma}\p{y^{k_n} - y^{k_n+1}} + \theta L\p{x^{k_n+1} - x^{k_n}} \eqcolon v^{n} \xrightarrow[n\to \infty]{} 0,
\end{align*}
due to \eqref{eq:Lx_diff_to_zero}, \eqref{eq:y_diff_to_zero}, and \eqref{eq:x_diff_to_zero}.
In particular,
\begin{align*}
    A\Bigp{x^{k_{n}+1},y^{k_{n}+1}} \ni \Bigp{u^{n},v^{n}} \to \Bigp{0,0} \text{ as } n\to \infty,
\end{align*}
and we conclude, using weak-strong closedness of maximally monotone operators \cite[Proposition~20.38(ii)]{bauschke_combettes_2017_convex_analysis_monotone}, that
\begin{align}\label{eq:Opial_weak_sequential_cluster_point_in_set}
    0 \in A\Bigp{\bar x,\bar y}\quad \iff \quad \Bigp{\bar x,\bar y} \in Z,
\end{align}
i.e., every weak sequential cluster point of \(\Bigp{\Bigp{x^{k},y^{k}}}_{k\in\naturals}\) belongs to \(Z\).

Finally, because \(\p{x^{\star},y^{\star}}\in Z\) was arbitrary and both \eqref{eq:Opial_norm_converges} and \eqref{eq:Opial_weak_sequential_cluster_point_in_set} hold, \cite[Lemma~2.47]{bauschke_combettes_2017_convex_analysis_monotone} gives a point \(\p{\bar x,\bar y}\in Z\) such that
\begin{align*}
    \p{x^{k},y^{k}} \rightharpoonup \p{\bar x,\bar y} \text{ in } \Bigp{\calH\times\calG,\Biginner{\cdot}{\cdot}_{P}},
\end{align*}
and \Cref{lem:P_properties:strict} gives
\begin{align*}
    \p{x^{k},y^{k}} \rightharpoonup \p{\bar x,\bar y} \text{ in } \Bigp{\calH\times\calG,\Biginner{\cdot}{\cdot}},
\end{align*}
as claimed.

\section{Conclusion and future work}\label{sec:conclusion}

In this paper, we established weak sequential convergence of the Chambolle--Pock iterates in real Hilbert spaces for the range \(0 < \theta \leq 1/2\) under
\begin{align*}
    \tau\sigma\Bignorm{L}^{2}
    <
    \frac{4\theta(2-\theta)}{1 - 2\theta + 9\theta^{2} - 4\theta^{3}},
\end{align*}
as stated in \Cref{thm:main:sequence_convergence}.
Weak convergence had not previously been established for the Chambolle--Pock iteration over this range.
The underlying Lyapunov estimates remain valid under \eqref{eq:lyapunov:parameter_condition} throughout the interval \(0 < \theta \leq 1\).

A natural next question is whether the admissible step-size bound can be improved in the small-\(\theta\) regime.
The Lyapunov function \(\mathcal{V}\) in \eqref{eq:lyapunov_function} from \Cref{def:lyapunov_function} depends only on the states
\begin{align*}
    x^{\star}, x^{k}, x^{k+1}, y^{\star}, y^{k}, y^{k+1}.
\end{align*}
However, strong numerical evidence in \cite[Figure~2]{upadhyaya_etal_2026_auto_lyap_software_suite} suggests that one can substantially enlarge the admissible range of \(\tau\sigma\Bignorm{L}^{2}\), especially for \(0 < \theta \leq 1/2\), by considering Lyapunov functions with a slightly longer memory, namely
\begin{align*}
    x^{\star}, x^{k}, \ldots, x^{k+h}, y^{\star}, y^{k}, \ldots, y^{k+h},
\end{align*}
with \(h=3\).
At present, however, no closed-form certificate of this type is known.
Deriving such a Lyapunov function, together with an explicit step-size condition stated in closed form, appears to be a natural next step.
Current numerical evidence also suggests that taking \(h>3\) does not lead to further improvements.

\paragraph{Acknowledgments.}
The author is grateful to Sebastian Banert and Pontus Giselsson for several helpful discussions on this problem.

\section*{Declarations}

\subsection*{Funding}
M. Upadhyaya is supported by the European Union (ERC grant CASPER 101162889).
This work was also funded in part by the French government, through the Agence Nationale de la Recherche, as part of the ``France 2030'' program under reference ANR-23-IACL-0008 ``PR[AI]RIE-PSAI''.
The views and opinions expressed are those of the author only and do not necessarily reflect those of the funding agencies or granting authorities, which cannot be held responsible for them.

\subsection*{Conflict of interest}
The author declares no relevant financial or non-financial interests to disclose.

\subsection*{Data availability}
No datasets were generated or analyzed for this paper.

\phantomsection
\addcontentsline{toc}{section}{References}
\bibliographystyle{abbrvurl}
\bibliography{mybib}

\end{document}